\documentclass[11pt]{article}
\usepackage[utf8]{inputenc}
\usepackage[T1]{fontenc}
\usepackage{tgtermes}
\usepackage{amsfonts,amsmath,amssymb,amsthm}
\usepackage{enumerate}
\usepackage[linesnumbered,ruled,vlined]{algorithm2e}

\usepackage{capt-of}
\usepackage{color}
\usepackage{tikz}
\usepackage{url}

\usepackage[margin=1in]{geometry}

\newtheorem{theorem}{Theorem}[section]
\newtheorem{lemma}[theorem]{Lemma}
\newtheorem{claim}[theorem]{Claim}
\newtheorem{proposition}[theorem]{Proposition}

\newtheorem{problem}{Problem}


\newcommand{\NN}{\mathbb{N}}

\newcommand{\vozl}[1]{\operatorname{V}(#1)}
\newcommand{\neodv}[1]{\operatorname{\alpha}(#1)}
\newcommand{\klika}[1]{\operatorname{\omega}(#1)}

\newcommand{\oo}{\operatorname{o}}
\newcommand{\OO}{\operatorname{O}}
\newcommand{\OOm}{\operatorname{\Omega}}
\newcommand{\TT}{\operatorname{\Theta}}

\newcommand{\ie}{i.e.}

\title{Minimal normal graph covers}
\date{}
\author{David Gajser\thanks{A large part of the research was done when employed at IMFM, Department of Mathematics, Ljubljana, Slovenia and during a visit on Simon Fraser University which was supported by Javni sklad RS za razvoj kadrov in štipendije, 163. JR.}\\ FMF, Department of Mathematics\\ University of Ljubljana\\ Ljubljana, Slovenia 
\\{david.gajser{@}fmf.uni-lj.si}
\and
Bojan Mohar\thanks{Supported in part by an NSERC Discovery Grant (Canada),
   by the Canada Research Chair program, and by the
    Research Grant P1--0297 of ARRS (Slovenia).}~\thanks{On leave from:
    IMFM \& FMF, Department of Mathematics, University of Ljubljana, Ljubljana,
    Slovenia.}\\ Department of Mathematics\\ Simon Fraser University\\ Burnaby, BC~~V5A~1S6     
    \\{mohar{@}sfu.ca}
}

\begin{document}
\maketitle

\begin{abstract}
A graph is normal if it admits a clique cover $\mathcal C$ and a stable set cover $\mathcal S$ such that each clique in $\mathcal C$ and each stable set in $\mathcal S$ have a vertex in common. The pair $(\mathcal{C,S})$ is a normal cover of the graph.
We present the following extremal property of normal covers. For positive integers $c,s$, if a graph with $n$ vertices admits a normal cover with cliques of sizes at most $c$ and stable sets of sizes at most $s$, then $c+s\geq\log_2(n)$.
For infinitely many $n$, we also give a construction of a graph with $n$ vertices that admits a normal cover with cliques and stable sets of sizes less than $0.87\log_2(n)$.
Furthermore, we show that for all $n$, there exists a normal graph with $n$ vertices, clique number $\TT(\log_2(n))$ and independence number $\TT(\log_2(n))$.

When $c$ or $s$ are very small, we can describe all normal graphs with the largest possible number of vertices that allow a normal cover with cliques of sizes at most $c$ and stable sets of sizes at most $s$. However, such extremal graphs remain elusive even for moderately small values of $c$ and $s$.
\end{abstract}

\section{Introduction}

A graph $G$ is said to be \emph{normal} if it admits a set $\mathcal C$ of cliques and a set $\mathcal S$ of stable sets with the following properties:
\begin{itemize}
  \item[(1)] $\mathcal C$ is a cover of $G$, \ie, every vertex in $G$ belongs to one of the cliques in $\mathcal C$;
  \item[(2)] $\mathcal S$ is a cover of $G$, \ie, every vertex in $G$ belongs to one of the stable sets in $\mathcal S$;
  \item[(3)] Every clique in $\mathcal C$ and every stable set in $\mathcal S$ have a vertex in common.
\end{itemize}
We say that the pair $(\mathcal{C},\mathcal{S})$ is a \emph{normal cover} of $G$.

Clearly, a graph is normal if and only if its complement is normal.
This property is reminiscent on the notion of perfect graphs, see \cite{drugace,Korner73,Perfect_couples}. Namely, normality is one of the basic properties that every perfect graph satisfies. Of course, normality is much weaker condition since every odd cycle of length at least 9 is normal. Actually, De Simone and K\"orner conjectured \cite{zanimivaDomneva} that every graph without induced $C_5$, $C_7$, and the complement of $C_7$ is normal. Their conjecture was recently disproved \cite{Ararat}. Nevertheless, normal graphs make an interesting class of graphs.
Their importance lies in their close relationship to the notion of graph entropy, one of central concepts in information theory; see Csisz{\'a}r and K\"orner \cite{CsKor11} or \cite{CsKoLoMaSi90,Korner73,KorLongo73}.

For $c,s\in\NN$, we say that a normal cover is a \emph{$(c,s)$-normal cover}, if the sizes of cliques in the cover are at most $c$ and the sizes of stable sets in the cover are at most $s$. Furthermore, if a graph $G$ admits a $(c,s)$-normal cover, we say it is \emph{$(c,s)$-normal}. In this paper we consider the following extremal problem:
\begin{problem}
How large can a $(c,s)$-normal graph be?
\end{problem}
We define $N(c,s)$ as the maximum number of vertices of a $(c,s)$-normal graph. The obtained results that are reminiscent on the basics of Ramsey theory are far from conclusive.

Note first that it is not completely obvious that $N(c,s)$ is always finite. However, in Section~\ref{upper} we prove an upper bound given by the following theorem.

\begin{theorem}
 \label{upperBound}
If\/ $G$ is a $(c,s)$-normal graph, then $|G|\leq 2^{c+s}$, \ie, $N(c,s)\leq2^{c+s}$.
\end{theorem}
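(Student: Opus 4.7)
The plan is to prove Theorem~\ref{upperBound} by induction on $c + s$. The base cases $c = 0$ and $s = 0$ are immediate, since a clique or stable set of size zero covers no vertex and hence forces $V(G) = \emptyset$.

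For the inductive step, assume $c, s \ge 1$ and let $G$ be a $(c,s)$-normal graph with cover $(\mathcal{C}, \mathcal{S})$. Fix any $S_0 \in \mathcal{S}$; because a clique and a stable set share at most one vertex yet every $C \in \mathcal{C}$ meets $S_0$, we get $|C \cap S_0| = 1$ for every $C \in \mathcal{C}$. Consequently $\mathcal{C}' := \{C \setminus S_0 : C \in \mathcal{C},\ C \not\subseteq S_0\}$ is a family of cliques of size at most $c - 1$ that covers $V(G) \setminus S_0$. If I can equip $G[V(G) \setminus S_0]$ with a matching stable-set cover $\mathcal{S}'$ by stable sets of size at most $s$, each meeting every element of $\mathcal{C}'$, then $G[V(G) \setminus S_0]$ is $(c-1, s)$-normal and the induction hypothesis together with $|S_0| \le s$ yields
\[
|V(G)| \;\le\; s + 2^{c+s-1} \;\le\; 2^{c+s},
\]
where the second inequality uses $s \le 2^{c+s-1}$ for $c \ge 1$.

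The main obstacle is the construction of $\mathcal{S}'$. The naive attempt $\{S \setminus S_0 : S \in \mathcal{S} \setminus \{S_0\}\}$ can violate the intersection condition: when the unique vertex of $C \cap S$ lies in $S_0$, the restricted sets $C \setminus S_0$ and $S \setminus S_0$ are disjoint. To handle this, I would try, for each $v \in V(G) \setminus S_0$, to select a stable set $T_v \subseteq V(G) \setminus S_0$ of size at most $s$ that contains $v$ and meets every element of $\mathcal{C}'$: when some $S \in \mathcal{S}$ with $v \in S$ is disjoint from $S_0$ the set $S$ itself works, and otherwise a more delicate transversal-type construction is required, or one falls back on the symmetric reduction that removes a clique $C_0 \in \mathcal{C}$ of size at most $c$ (arguing that $G[V(G) \setminus C_0]$ is $(c, s-1)$-normal) and combines both reductions in a case analysis. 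Verifying that at least one such construction always succeeds is the technical heart of the argument, and I expect this to be the hardest step.
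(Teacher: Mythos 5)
Your proposal stops exactly at the step that cannot be completed, and the obstacle you flag is not merely technical --- the whole reduction scheme is doomed. If it were true that for every $(c,s)$-normal graph $G$ one could choose some $S_0\in\mathcal{S}$ so that $G-S_0$ is $(c-1,s)$-normal, you would obtain the recursion $N(c,s)\le s+N(c-1,s)$, hence $N(c,s)\le cs$; the fallback you mention (removing a clique $C_0$ instead, in a case analysis) would still give $N(c,s)\le\max\bigl(s+N(c-1,s),\,c+N(c,s-1)\bigr)$ and therefore a bound polynomial in $c+s$. Both contradict Theorem~\ref{lowerBound}, which exhibits $(c,c)$-normal graphs on $\sqrt{5}^{\,c}$ vertices. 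So there must exist $(c,s)$-normal graphs for which \emph{no} choice of $S_0$ (or $C_0$) admits the stable-set cover $\mathcal{S}'$ you need; the ``transversal-type construction'' you hope for cannot always succeed. The quantitative moral is that any correct reduction from $c$ to $c-1$ must be allowed to lose a constant \emph{fraction} of the vertices (which is what makes the answer exponential), not just an additive $s$ vertices.

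The paper's proof keeps your induction on $c+s$ but reduces differently: it takes a \emph{minimal} cover, so every clique and every stable set has a private vertex, and instead of deleting a whole stable set it shrinks \emph{every} clique by one vertex, merging (or deleting) the private vertices of each clique via the operation \textsc{C-reduce}. This costs one vertex per clique, so one needs $|\mathcal{C}|$ to be at most about $n/2$; that is the real content of the argument, established in Lemma~\ref{central} ($|\mathcal{C}|+|\mathcal{S}|\le n+1$ for a minimal cover), whose proof rests on a linear-algebraic rank computation (Lemma~\ref{matt}: writing $\sum_i c_i s_i^T=J_t$ and $\sum_{i\le t} c_i s_i^T=I_t$ forces $n-t\ge\operatorname{rank}(J_t-I_t)=t$). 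With that, after reducing the smaller of the two families one keeps at least $(n-1)/2$ vertices of a $(c-1,s)$-normal graph, giving $N(c,s)\le 2N(c-1,s)+1\le 2^{c+s}-1$. Nothing in your outline supplies a substitute for this counting of cliques and stable sets, which is the missing idea.
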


To state the theorem in different words, if $G$ has $n$ vertices and is $(c,s)$-normal, then $c+s\geq \log n$ where the logarithm has base 2, as it is assumed for all logarithms  in this paper. Hence, each normal cover of $G$ must have at least one \emph{element} (\ie, a stable set or a clique) of size $\OOm(\log n)$.

In analogy with perfect graphs, one might think that $N(c,s)$ should be closer to $c\cdot s$ (see \cite{Lovasz,CitatLovasza}). However, the bound in Theorem~\ref{upperBound} must be exponential, because in Section~\ref{lowerF} we present a construction that proves the following lower bounds.

\begin{theorem}
 \label{lowerBound}
For all integers $c\geq 6$, $N(c,c)\geq\sqrt{5}^{c}$. Additionally, $N(5,5)\geq 55$ and $N(4,4)\geq 25$.
\end{theorem}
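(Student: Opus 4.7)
My plan is to prove the lower bound constructively, combining explicit small cases with an iterated amplification. Because strong and lexicographic products scale both clique and stable set sizes multiplicatively (e.g.\ $G\boxtimes H$ is only $(\omega(G)\omega(H),\alpha(G)\alpha(H))$-normal when $G$ and $H$ are), no single base graph iterated under a product can yield exponential growth in $c$. Instead, I would aim for an amplification lemma of the form
\begin{equation*}
N(c+2,\,c+2) \;\geq\; 5\cdot N(c,c),
\end{equation*}
and combine it with explicit base cases. Iterating from $N(6,6)\geq 125=(\sqrt{5})^6$ yields $N(6+2k,\,6+2k)\geq 5^{3+k}=(\sqrt{5})^{6+2k}$ for all $k\geq 0$, which handles all even $c\geq 6$; a parallel iteration from an odd base (either from $N(5,5)\geq 55$ with a small additional refinement to close the shortfall $5\cdot 55=275<\lceil(\sqrt{5})^7\rceil=280$, or from a direct construction showing $N(7,7)\geq 280$) handles odd $c\geq 7$.

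I would first settle the small cases $N(4,4)\geq 25$, $N(5,5)\geq 55$ and the base $N(6,6)\geq 125$ by exhibiting specific graphs together with explicit $(c,c)$-normal covers. A natural candidate for the 25-vertex graph is a Cayley-type graph on $\ZZ_5\times\ZZ_5$ (for example one using the edges of $C_5$ on one coordinate and its complement on the other), equipped with 25 cliques of size four and 25 stable sets of size four chosen symmetrically under the $\ZZ_5^2$-action; the symmetry reduces verification of the intersection property to a single pairwise check. Analogous constructions on $\ZZ_5^3$, or 5 twisted copies of the 25-vertex graph, should yield the 55- and 125-vertex graphs.

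The heart of the argument, and the main obstacle, is the amplification lemma. The naive strong product $G\boxtimes C_5$ is only $(2c,2c)$-normal, which is too wasteful. Instead, given a $(c,c)$-normal graph $G$ on $n$ vertices with cover $(\mathcal{C},\mathcal{S})$, the plan is to build a graph $G'$ on $V(G)\times\ZZ_5$ whose edges combine a $C_5$-type adjacency on the second coordinate with the $G$-adjacency on the first, together with a cover in which each new clique has the form ``(clique of $G$)\,$\cup$\,(two vertices along $\ZZ_5$ forming an edge of $C_5$)'' and hence size $c+2$, with a symmetric description for stable sets. Because $C_5$ itself is not normal, the pairing of the new cliques with the new stable sets cannot simply inherit from a cover of $C_5$; it must exploit the existing intersections $C\cap S\neq\emptyset$ inside $G$'s cover. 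Verifying that every new clique in the cover meets every new stable set, while maintaining the size bound $c+2$ on both sides, is the technical crux and is where I expect the bulk of the case analysis to lie.
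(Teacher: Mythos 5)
Your overall strategy is the right one and is in fact the paper's: the bound comes from a five-fold amplification $N(c+2,c+2)\geq 5N(c,c)+5$, realized by arranging five copies of the previous graph around a pentagon. But the proposal stops exactly at the step that constitutes the proof. The ``technical crux'' you defer --- how to pair the enlarged cliques with the enlarged stable sets so that every clique still meets every stable set while each grows by only $2$ --- is the entire content of the construction, and you give no candidate pairing, so as written there is nothing to verify. The paper's resolution is cleaner than your $V(G)\times\ZZ_5$ sketch: one adds only \emph{five} new hub vertices $A,B,C,D,E$ carrying a blue $5$-cycle (its complement being the red $5$-cycle), and joins each of the five copies of $F_{c-2}$ completely in blue to the two endpoints of one blue edge and completely in red to the two endpoints of the ``opposite'' red diagonal. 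Every clique of a copy's cover is extended by that copy's blue pair and every stable set by that copy's red pair. Each blue edge of the pentagon meets every red diagonal except the one opposite to it, and that single exceptional pair is assigned to the \emph{same} copy, where the clique and stable set already intersect by the induction hypothesis --- this is precisely the point where ``the existing intersections inside $G$'s cover'' are exploited, as you correctly guessed but did not carry out.

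Two further gaps are numerical and structural. First, because the construction adds five genuinely new vertices, the recursion is $f_c=5f_{c-2}+5$ rather than your multiplicative $N(c+2,c+2)\geq 5N(c,c)$; the extra $+5$ is exactly what closes the odd-$c$ shortfall you flag, since $5\cdot 55+5=280\geq\sqrt{5}^{\,7}\approx 279.5$, after which $\sqrt{5}^{\,c+2}=5\sqrt{5}^{\,c}$ makes the induction go through for all $c\geq 6$ from the two bases $f_6=130$ and $f_7=280$. Without the additive term you would indeed need a separate construction for $N(7,7)\geq 280$, which you do not supply. Second, your base cases are themselves unproved: the Cayley-graph candidates on $\ZZ_5\times\ZZ_5$ and $\ZZ_5^3$ are only conjectured to admit the required covers, whereas in the paper the values $N(4,4)\geq 25$ and $N(5,5)\geq 55$ fall out of the same recursion ($f_4=5\cdot 4+5=25$, $f_5=5\cdot 10+5=55$ starting from $f_1=1$, $f_2=4$) and need no separate argument. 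In short: right skeleton, but the lemma that does all the work is missing, and the arithmetic as stated does not reach $\sqrt{5}^{\,c}$ for odd $c$.
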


To compare this bound with the bound in Theorem~\ref{upperBound}, note that $0.86 < \frac{1}{\log{\sqrt{5}}} < 0.87$. Hence, every normal cover of a graph on $n$ vertices must have a stable set or a clique of size at least $\tfrac{1}{2}\log n$, but for infinitely many $n$ we can construct a normal graph with a normal cover where all cliques and stable sets from the cover are of size less than $0.87\log n$.

To prove Theorem~\ref{lowerBound} we construct, for each positive integer $c$, a $(c,c)$-normal graph with roughly $\sqrt{5}^{c}$ vertices. We also present a different, simpler construction of a $(c,c)$-normal graph in Section~\ref{lowerG} which gives a bit weaker lower bound for $N(c,c)$. The construction  determines only some pairs of vertices to be connected by an edge or a non-edge. Any other pair of vertices can be adjacent or non-adjacent without affecting the $(c,c)$-normality.  If for those pairs we choose independently with probability $\tfrac{1}{2}$  whether they are connected by an edge or not, then we argue that for large $c$, we get small independence and clique numbers with positive probability. This way, we obtain in Section~\ref{lowerR} the following theorem.

\begin{theorem}
	\label{general}
For all $n$, there exists a normal graph $G$ with $n$ vertices that has clique number and independence number of order $\TT(\log n)$.
\end{theorem}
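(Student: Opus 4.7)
The plan is to apply a probabilistic argument on top of the simpler $(c,c)$-normal construction that will be described in Section~\ref{lowerG}. That construction specifies a vertex set $V$ of size $N$ (exponential in $c$) together with a clique cover $\mathcal{C}$ and stable set cover $\mathcal{S}$ satisfying the intersection property. In doing so it \emph{forces} certain pairs to be edges (those lying inside some clique of $\mathcal{C}$) and certain pairs to be non-edges (those lying inside some stable set of $\mathcal{S}$), while leaving all other pairs free. Any graph on $V$ respecting these constraints admits $(\mathcal{C},\mathcal{S})$ as a normal cover, and is therefore $(c,c)$-normal.

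I would now build a random graph $G$ on $V$ by resolving each unconstrained pair independently as an edge with probability $\tfrac12$; normality is guaranteed regardless of the outcome. To bound $\klika{G}$ and $\neodv{G}$ I would use the first-moment method. For $U\subseteq V$ with $|U|=k$, write $r(U)$ for the number of unconstrained pairs inside $U$. If $U$ contains a forced non-edge then $U$ is not a clique; otherwise the probability that $U$ is a clique equals $2^{-r(U)}$. The crucial ingredient is that in the Section~\ref{lowerG} construction every vertex lies in only a bounded number of cliques of $\mathcal{C}$ and stable sets of $\mathcal{S}$, each of size at most $c$, so the number of forced pairs touching any vertex is $\OO(c)$. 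Consequently, for any $U$ of size $k$ the number of forced pairs inside $U$ is $\OO(ck)$, and $r(U)\geq\binom{k}{2}-\OO(ck)$. The union bound then yields
\[
\Pr[\klika{G}\geq k] \;\leq\; \binom{N}{k}\,2^{-\binom{k}{2}+\OO(ck)} \;\leq\; 2^{\,k\log N-\binom{k}{2}+\OO(ck)},
\]
which is $\oo(1)$ once $k$ is a sufficiently large constant multiple of $c$; the analogous bound for $\neodv{G}$ follows by the dual argument. Since $c=\TT(\log N)$, with positive probability $G$ satisfies $\klika{G},\neodv{G}=\OO(\log N)$. The matching lower bound $\max\{\klika{G},\neodv{G}\}\geq \tfrac12\log N$ is the classical Ramsey inequality, so in fact $\klika{G}=\neodv{G}=\TT(\log N)$.

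To obtain a normal graph on \emph{exactly} $n$ vertices for arbitrary $n$, the main subtlety is that induced subgraphs of normal graphs are not automatically normal. I would handle this by choosing $c$ so that $N$ is only slightly larger than $n$ and truncating the Section~\ref{lowerG} construction along its natural recursive block structure, removing whole sub-blocks in such a way that $(\mathcal{C},\mathcal{S})$ restricts to a valid normal cover of the remaining graph. The probabilistic argument above applies verbatim to the truncated construction, and since $N$ shrinks by at most a bounded factor we still have $c=\TT(\log n)$, yielding a normal graph on $n$ vertices with $\klika{G}=\neodv{G}=\TT(\log n)$. The main obstacle in the whole approach is verifying the bounded-degree property of the forced-edge and forced-non-edge graphs in the Section~\ref{lowerG} construction; once this is in place, the rest is standard probabilistic machinery.
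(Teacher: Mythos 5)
Your overall strategy (randomize the unconstrained pairs of the $G_c$ construction, apply a first-moment bound, then truncate along the recursive block structure to hit exactly $n$ vertices) is the same as the paper's, but the step you yourself flag as ``the main obstacle'' is in fact false, and this is a genuine gap. In $G_c$ the forced edges do \emph{not} form a bounded-degree (or even $\OO(c)$-degree) graph: a vertex near the top of the recursion is joined by forced blue edges to \emph{every} vertex of an entire copy of $G_{c-1}$, so its forced blue degree is about $3\cdot 2^{c-2}$, exponential in $c$; the same holds for forced red edges, and likewise each such vertex lies in $2^{\OOm(c)}$ cliques of the cover $\mathcal{C}$. Consequently your bound $r(U)\geq\binom{k}{2}-\OO(ck)$ has no justification, and without it the union bound does not close (note also that even if the forced pairs inside $U$ were $\OO(ck)$ with $k=\TT(c)$, that error term is $\TT(c^2)$, the same order as the main term $\binom{k}{2}$, so the unspecified constants would matter).

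What is actually needed, and what the paper proves as a separate lemma, is the extremal statement that \emph{every} subset $U$ of $4c$ vertices of $G_c$ induces at least $4.1c^2$ pairs that are not blue edges (out of $\binom{4c}{2}\approx 8c^2$), and symmetrically for red. The proof is not a degree count: one takes a $4c$-set maximizing the number of induced blue edges, shows by exchange arguments that it may be assumed closed under blue predecessors and essentially ``filled from the leaves up,'' deduces that more than $c$ of its vertices have height $c$ (hence most of $U$ sits in the bottom few levels, where blue degrees are small), and only then counts non-blue pairs. The explicit constant $4.1>4$ is what beats $\binom{n}{4c}\le 2^{4c\log n}\approx 2^{4c^2+\OO(c)}$ in the union bound. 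So the probabilistic machinery and the truncation step in your proposal are fine, but the combinatorial heart of the argument is missing and cannot be supplied by the bounded-degree claim you propose.
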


From Ramsey theory we know that every graph on $n$ vertices must contain a stable set or a clique of size $\frac{1}{2}\log n$.
Moreover, random graphs have independence number and clique number of order $\TT(\log n)$. Thus, Theorem~\ref{general} shows that normal graphs can have clique number and  independence number comparable to those of random graphs.
In relation to this, it should be mentioned that partial motivation for this study came from the question whether random graphs $G(n,p)$ are normal or not with high probability. As of today, this question is still unresolved.

Finally, in Section~\ref{exact} we describe, for small values of $c$ and $s$ ($c\le2$ or $s\le2$), all $(c,s)$-normal graphs with $N(c,s)$ vertices. For $c=3$ (or $s=3$) we derive a polynomial bound by showing that $N(3,s) = \Theta(s^3)$.

It is worth observing that the results of this paper are reminiscent to the basic facts about Ramsey numbers: the upper bound for $N(c,s)$ is exponential as is the lower bound for $N(c,c)$; the bases of exponents in the lower and upper bound are different and there is no clear evidence whether any of them is asymptotically tight. Small values of $N(c,s)$ are easy to obtain while the exact values for even a reasonably small values of $c$ and $s$ seem out of reach. Although our techniques are different from those in Ramsey theory, they exhibit the non-constructiveness familiar from Ramsey theory. While the lower bound is obtained by a direct, nonprobabilistic construction, the upper bound is proved by a non-combinatorial technique based on an algebraic argument using vector space dimension.

\section{Constructions of normal graphs with small cliques and stable sets}
	\label{lower}

In this section we prove Theorem~\ref{lowerBound} and Theorem~\ref{general}. For each integer $c>1$, we will first present a simple construction of a family of $(c,c)$-normal graphs with roughly $2^{c}$ vertices.
Then we will continue in two directions. For the first direction, we will show with a probabilistic argument that there is some graph in the constructed family of $(c,c)$-normal graphs that has independence number and clique number of size $\TT(c)$, which will prove Theorem~\ref{general}. For the second direction, we will enhance the construction to get a $(c,c)$-normal graph with at least $\sqrt{5}^{c}$ vertices, which will prove Theorem~\ref{lowerBound}.

\subsection{The construction of $G_c$}
	\label{lowerG}
A \emph{red-blue graph} is a graph  with each edge of color red or blue. When drawing such a graph, we will additionally make blue edges solid and red edges dashed. For an edge-colored graph $H$,  we say that a graph \emph{$G$ is of type $H$} if
\begin{itemize}
\item $\vozl{G}=\vozl{H}$,
\item each blue edge of $H$ is an edge of $G$,
\item no red edge of $H$ is an edge of $G$.
\end{itemize}
For a red-blue graph $H$ and for positive integers $c,s$, let $\mathcal{C}=\{C_1,C_2,\ldots, C_k\}$ be a cover of $H$ with blue cliques of size at most $c$ and let $\mathcal{S}=\{S_1,S_2,\ldots, S_l\}$ be a cover of $H$ with red cliques of size at most $s$. If the clique $C_i$ and the stable set $S_j$ intersect for any $i$ and $j$, then we say that the pair $(\mathcal{C},\mathcal{S})$ is a \emph{$(c,s)$-normal cover of $H$}. If $H$ admits a $(c,s)$-normal cover, we say that it is \emph{$(c,s)$-normal}. From the definition of graphs of type $H$ it is clear that if $(\mathcal{C},\mathcal{S})$ is a $(c,s)$-normal cover of $H$, then $(\mathcal{C},\mathcal{S})$ is also a $(c,s)$ normal cover of any graph of type $H$.

\begin{figure}[hbt]
	\begin{center}
	\begin{tikzpicture}[scale=1.8]
	\tikzstyle{vrata}=[circle,draw,inner sep=0pt, minimum size = 0.2em]
	\tikzstyle{modra}=[-, blue]
	\tikzstyle{rdeca}=[dashed,red,-]
	\node (s1) at ( -7,-1) [vrata, label = below:$v_1$] {};
	\node (s2) at ( -6,-1) [vrata, label = below:$v_2$] {};
	
	\node (p1) at ( -7,0) [vrata, label = above:$0$] {}
			edge [modra] (s1)
			edge [rdeca] (s2);
	\node (p2) at ( -6,0) [vrata, label = above:$1$] {}
			edge [modra] (s2)
			edge [rdeca] (s1);
	\end{tikzpicture}
	\end{center}
\caption{The red-blue graph $G_2$. Blue cliques $\{0,v_1\},\{1,v_2\}$ and red cliques $\{0,v_2\},\{1,v_1\}$ form a $(2,2)$-normal cover.}
\label{G2}
\end{figure}
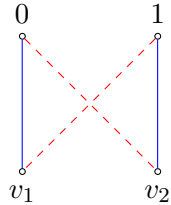

For an integer $c\geq 1$, define a red-blue graph $G_c$ recursively as $G_1=K_1$ and, for $c\geq 2$, $G_c$ is obtained from two copies of the graph $G_{c-1}$, denoted by $G_{c-1}(0)$ and $G_{c-1}(1)$, by adding two new vertices $0$ and $1$, such that each vertex of $G_{c-1}(0)$ is connected to $0$ by a blue edge and to $1$ by a red edge and each vertex of $G_{c-1}(1)$ is connected to $1$ by a blue edge and to $0$ by a red edge (see Figure~\ref{rekurzija}).

\begin{figure}[hbt]
\begin{center}
	\begin{tikzpicture}
	\tikzstyle{vrata}=[circle,draw,inner sep=0pt, minimum size = 0.2em]
	\tikzstyle{poddrevo}=[circle,draw,inner sep=0pt, shape border rotate=90, minimum height=5.5em,minimum width=5.5em]
	\tikzstyle{prazen}=[circle,inner sep=0pt, minimum size=3em]
		\node at ( 0,0) [poddrevo] (levi) {$G_{c-1}(0)$};
		\node at (3,0) [poddrevo] (desni)  {$G_{c-1}(1)$};
		\node (ne1) at ( 0,3) [vrata, label = left:$0$] {};
			\node (AA) at (0,0.3)[prazen]{};
			\draw[-, blue] (AA.north) edge (ne1);
			\draw[-, blue] (AA.north east) edge (ne1);
			\draw (-0.35,2)[blue] arc (250:290:1);
			\draw[-, blue] (AA.north west) edge (ne1);
			\node (AB) at (2.8,0.2)[prazen]{};
			\draw[dashed,red,-] (AB.north) edge (ne1);
			\draw[dashed,red,-] (AB.west) edge (ne1);
			\draw (0.6,2)[red,dashed] arc (295:335:1);
			\draw[dashed,red,-] (AB.north west) edge (ne1);
		\node (ne2) at ( 3,3) [vrata,label = left:$1$] {};
			\node (BB) at (3,0.3)[prazen]{};
			\draw[-, blue] (BB.north) edge (ne2);
			\draw[-, blue] (BB.north east) edge (ne2);
			\draw (2.65,2)[blue] arc (250:290:1);
			\draw[-, blue] (BB.north west) edge (ne2);
			\node (BA) at (0.2,0.2)[prazen]{};
			\draw[dashed,red,-] (BA.north) edge (ne2);
			\draw[dashed,red,-] (BA.north east) edge (ne2);
			\draw (2,2.5)[red,dashed] arc (205:245:1);
			\draw[dashed,red,-] (BA.east) edge (ne2);
	\end{tikzpicture}
	\caption{The recursive construction of $G_c$.}
	\label{rekurzija}
	\end{center}
\end{figure}
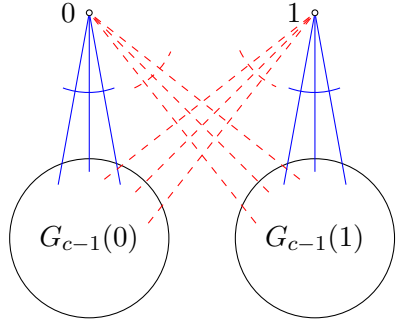

The graphs $G_2$, $G_3$ and $G_4$ are presented in Figures~\ref{G2}, \ref{G3} and \ref{G4}, respectively.

\begin{minipage}[hbt]{0.45\textwidth}
	\begin{center}
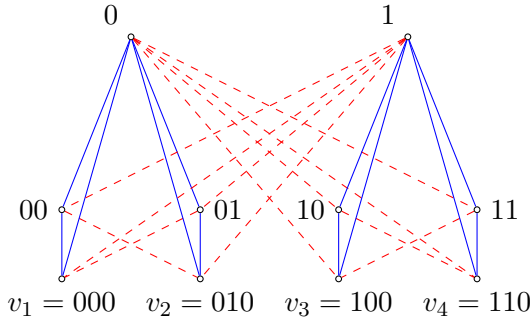

	\begin{tikzpicture}[scale=0.92]
	\tikzstyle{vrata}=[circle,draw,inner sep=0pt, minimum size = 0.2em]
	\tikzstyle{modra}=[-, blue]
	\tikzstyle{rdeca}=[dashed,red,-]
	\node (s1) at ( -7,-1) [vrata, label = below:${v_1=000}$] {};
	\node (s2) at ( -5,-1) [vrata, label = below:${v_2=010}$] {};
	\node (s3) at ( -3,-1) [vrata, label = below:${v_3=100}$] {};
	\node (s4) at ( -1,-1) [vrata, label = below:${v_4=110}$] {};
	\node (p1) at ( -7,0) [vrata, label =  left:$00$] {}
			edge [modra] (s1)
			edge [rdeca] (s2);
	\node (p2) at ( -5,0) [vrata, label =  right:$01$] {}
			edge [modra] (s2)
			edge [rdeca] (s1);
	\node (p3) at ( -3,0) [vrata, label =  left:$10$] {}
			edge [modra] (s3)
			edge [rdeca] (s4);
	\node (p4) at ( -1,0) [vrata, label =  right:$11$] {}
			edge [modra] (s4)
			edge [rdeca] (s3);
	\node (e1) at ( -6,2.5) [vrata, label = above left:$0$] {}
			edge [modra] (s1)
			edge [modra] (s2)
			edge [modra] (p1)
			edge [modra] (p2)
			edge [rdeca] (s3)
			edge [rdeca] (s4)
			edge [rdeca] (p3)
			edge [rdeca] (p4);
	\node (e2) at ( -2,2.5) [vrata, label = above left:$1$] {}
			edge [modra] (s3)
			edge [modra] (s4)
			edge [modra] (p3)
			edge [modra] (p4)
			edge [rdeca] (s1)
			edge [rdeca] (s2)
			edge [rdeca] (p1)
			edge [rdeca] (p2);
	\end{tikzpicture}
	\end{center}
\vspace{-20pt}
	\captionof{figure}{The graph $G_3$ with vertices labeled as binary sequences.}
	\label{G3}
\end{minipage} \hfill
\begin{minipage}[hbt]{0.45\textwidth}
	\begin{center}
	\begin{tikzpicture}[scale=0.45]
	\tikzstyle{vrata}=[circle,draw,inner sep=0pt, minimum size = 0.2em]
	\tikzstyle{modra}=[-, blue]
	\tikzstyle{rdeca}=[dashed,red,-]
	\node (s1) at ( -7,-1) [vrata, label = below:$v_1$] {};
	\node (s2) at ( -5,-1) [vrata, label = below:$v_2$] {};
	\node (s3) at ( -3,-1) [vrata, label = below:$v_3$] {};
	\node (s4) at ( -1,-1) [vrata, label = below:$v_4$] {};
	\node (s5) at ( 1,-1) [vrata, label = below:$v_5$] {};
	\node (s6) at ( 3,-1) [vrata, label = below:$v_6$] {};
	\node (s7) at ( 5,-1) [vrata, label = below:$v_7$] {};
	\node (s8) at ( 7,-1) [vrata, label = below:$v_8$] {};
	\node (p1) at ( -7,0) [vrata] {}
			edge [modra] (s1)
			edge [rdeca] (s2);
	\node (p2) at ( -5,0) [vrata] {}
			edge [modra] (s2)
			edge [rdeca] (s1);
	\node (p3) at ( -3,0) [vrata] {}
			edge [modra] (s3)
			edge [rdeca] (s4);
	\node (p4) at ( -1,0) [vrata] {}
			edge [modra] (s4)
			edge [rdeca] (s3);
	\node (p5) at ( 1,0) [vrata] {}
			edge [modra] (s5)
			edge [rdeca] (s6);
	\node (p6) at ( 3,0) [vrata] {}
			edge [modra] (s6)
			edge [rdeca] (s5);
	\node (p7) at ( 5,0) [vrata] {}
			edge [modra] (s7)
			edge [rdeca] (s8);
	\node (p8) at ( 7,0) [vrata] {}
			edge [modra] (s8)
			edge [rdeca] (s7);
	\node (e1) at ( -6,2.5) [vrata] {}
			edge [modra] (s1)
			edge [modra] (s2)
			edge [modra] (p1)
			edge [modra] (p2)
			edge [rdeca] (s3)
			edge [rdeca] (s4)
			edge [rdeca] (p3)
			edge [rdeca] (p4);
	\node (e2) at ( -2,2.5) [vrata] {}
			edge [modra] (s3)
			edge [modra] (s4)
			edge [modra] (p3)
			edge [modra] (p4)
			edge [rdeca] (s1)
			edge [rdeca] (s2)
			edge [rdeca] (p1)
			edge [rdeca] (p2);
	\node (e3) at (2,2.5) [vrata] {}
			edge [modra] (s5)
			edge [modra] (s6)
			edge [modra] (p5)
			edge [modra] (p6)
			edge [rdeca] (s7)
			edge [rdeca] (s8)
			edge [rdeca] (p7)
			edge [rdeca] (p8);
	\node (e4) at ( 6,2.5) [vrata] {}
			edge [modra] (s7)
			edge [modra] (s8)
			edge [modra] (p7)
			edge [modra] (p8)
			edge [rdeca] (s5)
			edge [rdeca] (s6)
			edge [rdeca] (p5)
			edge [rdeca] (p6);
	\node (m1) at ( -2,7) [vrata, label = above left:$0$] {}
			edge [modra] (s1)
			edge [modra] (s2)
			edge [modra] (s3)
			edge [modra] (s4)
			edge [modra] (p1)
			edge [modra] (p2)
			edge [modra] (p3)
			edge [modra] (p4)
			edge [modra] (e1)
			edge [modra] (e2)
			edge [rdeca] (s5)
			edge [rdeca] (s6)
			edge [rdeca] (s7)
			edge [rdeca] (s8)
			edge [rdeca] (p5)
			edge [rdeca] (p6)
			edge [rdeca] (p7)
			edge [rdeca] (p8)
			edge [rdeca] (e3)
			edge [rdeca] (e4);
	\node (m2) at ( 2,7) [vrata, label = above left:$1$] {}
			edge [modra] (s5)
			edge [modra] (s6)
			edge [modra] (s7)
			edge [modra] (s8)
			edge [modra] (p5)
			edge [modra] (p6)
			edge [modra] (p7)
			edge [modra] (p8)
			edge [modra] (e3)
			edge [modra] (e4)
			edge [rdeca] (s1)
			edge [rdeca] (s2)
			edge [rdeca] (s3)
			edge [rdeca] (s4)
			edge [rdeca] (p1)
			edge [rdeca] (p2)
			edge [rdeca] (p3)
			edge [rdeca] (p4)
			edge [rdeca] (e1)
			edge [rdeca] (e2);

	\fill[fill=white] (-4,0) ellipse (4.1 and 3);
	\fill[fill=white] (4,0) ellipse (4.1 and 3);
	\node (s1) at ( -7,-1) [vrata, label = below:$v_1$] {};
	\node (s2) at ( -5,-1) [vrata, label = below:$v_2$] {};
	\node (s3) at ( -3,-1) [vrata, label = below:$v_3$] {};
	\node (s4) at ( -1,-1) [vrata, label = below:$v_4$] {};
	\node (s5) at ( 1,-1) [vrata, label = below:$v_5$] {};
	\node (s6) at ( 3,-1) [vrata, label = below:$v_6$] {};
	\node (s7) at ( 5,-1) [vrata, label = below:$v_7$] {};
	\node (s8) at ( 7,-1) [vrata, label = below:$v_8$] {};
	\node (p1) at ( -7,0) [vrata] {}
			edge [modra] (s1)
			edge [rdeca] (s2);
	\node (p2) at ( -5,0) [vrata] {}
			edge [modra] (s2)
			edge [rdeca] (s1);
	\node (p3) at ( -3,0) [vrata] {}
			edge [modra] (s3)
			edge [rdeca] (s4);
	\node (p4) at ( -1,0) [vrata] {}
			edge [modra] (s4)
			edge [rdeca] (s3);
	\node (p5) at ( 1,0) [vrata] {}
			edge [modra] (s5)
			edge [rdeca] (s6);
	\node (p6) at ( 3,0) [vrata] {}
			edge [modra] (s6)
			edge [rdeca] (s5);
	\node (p7) at ( 5,0) [vrata] {}
			edge [modra] (s7)
			edge [rdeca] (s8);
	\node (p8) at ( 7,0) [vrata] {}
			edge [modra] (s8)
			edge [rdeca] (s7);
	\node (e1) at ( -6,2.5) [vrata] {}
			edge [modra] (s1)
			edge [modra] (s2)
			edge [modra] (p1)
			edge [modra] (p2)
			edge [rdeca] (s3)
			edge [rdeca] (s4)
			edge [rdeca] (p3)
			edge [rdeca] (p4);
	\node (e2) at ( -2,2.5) [vrata] {}
			edge [modra] (s3)
			edge [modra] (s4)
			edge [modra] (p3)
			edge [modra] (p4)
			edge [rdeca] (s1)
			edge [rdeca] (s2)
			edge [rdeca] (p1)
			edge [rdeca] (p2);
	\node (e3) at (2,2.5) [vrata] {}
			edge [modra] (s5)
			edge [modra] (s6)
			edge [modra] (p5)
			edge [modra] (p6)
			edge [rdeca] (s7)
			edge [rdeca] (s8)
			edge [rdeca] (p7)
			edge [rdeca] (p8);
	\node (e4) at ( 6,2.5) [vrata] {}
			edge [modra] (s7)
			edge [modra] (s8)
			edge [modra] (p7)
			edge [modra] (p8)
			edge [rdeca] (s5)
			edge [rdeca] (s6)
			edge [rdeca] (p5)
			edge [rdeca] (p6);
	\end{tikzpicture}
	\end{center}
\vspace{-30pt}
	
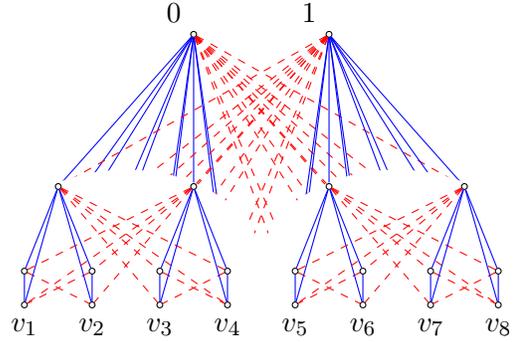
\captionof{figure}{The graph $G_4$. To make the picture more clear, the edges from $0$ and $1$ are not drawn completely.}
	\label{G4}
\end{minipage}
\vspace{5pt}

Next, we give an alternative, non-recursive description of $G_c$.
Let $\vozl{G_c}$ be the set of all binary sequences of length at least 1 and at most $c$ under the condition that all binary sequences of length exacly $c$ have to end with 0. Let $a=a_1 a_2 \ldots a_k\in\{0,1\}^k$ and $b=b_1 b_2 \ldots b_l\in\{0,1\}^l$ be two vertices of $G_c$. We may assume $k\leq l$. If $k=l$, then $a$ and $b$ are not adjacent. Otherwise, $ab$ is an edge if $b_1=a_1$, $b_2=a_2$ ,\ldots, $b_{k-1}=a_{k-1}$. The edge is blue if $a_k=b_k$ and red if $a_k\neq b_k$. The graph $G_3$ in Figure~\ref{G3} is labeled according to this description.
The proof that both descriptions of $G_c$ give the same graph is left to the reader.


\begin{proposition}
 \label{normalno}
For each positive integer $c$, the graph $G_c$ is $(c,c)$-normal and has $3\cdot2^{c-1}-2$ vertices.
\end{proposition}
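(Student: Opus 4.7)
The plan is to prove both assertions by induction on $c$. The vertex count is immediate: the construction gives the recurrence $|G_c|=2|G_{c-1}|+2$ with $|G_1|=1$, whose solution is $3\cdot 2^{c-1}-2$. Equivalently, in the binary-sequence description there are $2^k$ vertices at each length $k$ with $1\le k\le c-1$, plus $2^{c-1}$ vertices of length $c$ (the ones ending in $0$), summing to $(2^c-2)+2^{c-1}=3\cdot 2^{c-1}-2$.

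For $(c,c)$-normality, the base $c=1$ is trivial with $\mathcal{C}=\mathcal{S}=\{V(G_1)\}$. For the inductive step, I would start from $(c-1,c-1)$-normal covers $(\mathcal{C}^0,\mathcal{S}^0)$ and $(\mathcal{C}^1,\mathcal{S}^1)$ of $G_{c-1}(0)$ and $G_{c-1}(1)$, and assemble a cover of $G_c$ by a deliberate \emph{cross-pattern}: enlarge each blue clique of $\mathcal{C}^i$ by the new vertex $i$, and enlarge each red clique of $\mathcal{S}^i$ by the new vertex $1-i$. The recursive definition of $G_c$ guarantees that each enlargement is monochromatic of the correct color and of size at most $c$; since both new vertices appear many times on each side, the resulting families $\mathcal{C}$ and $\mathcal{S}$ are covers of $V(G_c)$.

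The real work is checking the intersection property for every pair in $\mathcal{C}\times\mathcal{S}$. There are four cases according to which sub-copy each clique comes from. When both come from the same sub-copy, their intersection is inherited from the inductive hypothesis applied inside that sub-copy. When they come from opposite sub-copies, the cross-pattern is engineered so that they share one of the new vertices $0$ or $1$; for instance, $C\cup\{0\}$ with $C\in\mathcal{C}^0$ meets $S\cup\{0\}$ with $S\in\mathcal{S}^1$ at $0$, and the other opposite-side case is symmetric.

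I do not expect a serious obstacle; the only point requiring care is the asymmetric choice of the extra vertex in the cross-pattern. Adding the same new vertex to both the blue and the red augmentations of a single sub-copy would leave some opposite-side pairs disjoint, so the swap $i\leftrightarrow 1-i$ between the blue and red enlargements is precisely what makes the induction close.
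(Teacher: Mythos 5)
Your proposal is correct, and it establishes the statement by a genuinely different (inductive) verification, even though the cover it produces is in fact the same object as the paper's. The paper works non-recursively: it labels the vertices by binary sequences, takes as clique generators the $2^{c-1}$ sequences of length $c$, defines for each such $v_i$ the blue clique $C_i$ of all prefixes of $v_i$ and the red clique $S_i$ of all ``prefixes with last bit flipped'' together with $v_i$, and checks the intersection property directly: for $i\ne j$ the prefix of $v_i$ up to the first position where $v_i$ and $v_j$ differ lies in both $C_i$ and $S_j$. Your cross-pattern induction (blue cliques of the sub-copy $G_{c-1}(i)$ get the new vertex $i$, red cliques get $1-i$) unrolls to exactly this standard normal cover, and your four-case check of the intersection property is complete and correct; the swap $i\leftrightarrow 1-i$ is indeed the point that makes the two opposite-sub-copy cases close. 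What the inductive route buys is that it never needs the binary-sequence relabeling at all and makes the ``why it works'' transparent; what the paper's explicit description buys is a closed-form picture of the cover that is reused later (it is named the \emph{standard normal cover} and invoked in Proposition~\ref{optimalno} and in the discussion following it), which a purely recursive definition would make more awkward to manipulate. Your vertex count is also fine and matches the paper's computation.
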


\begin{proof}
From the description of $G_c$ by binary sequences, it is clear that
$$|\vozl{G_c}|=2\cdot2^{c-1}+2^{c-2}+2^{c-3}+\cdots + 2=3\cdot2^{c-1}-2.$$
Let $m=2^{c-1}$ and let $v_1,v_2,\ldots, v_m$ be the vertices that correspond to all binary sequences of length $c$ in $\vozl{G_c}$. We will use these vertices as ``generators'' for a $(c,c)$-normal cover of $G_c$. For each vertex $v_i=a_{i,1} a_{i,2}\ldots a_{i,c}$, define the set $C_i$ as the set of all beginnings of the sequence $v_i$, \ie, $C_i=\{a_{i,1} a_{i,2} \ldots a_{i,k}\,|\ 1\leq k\leq c\}$. It is clear from the definition of $G_c$  that $C_i$ is a blue clique of size $c$. Similarly, for each vertex $v_i$, define the set $S_i=\{a_{i,1} a_{i,2} \ldots  a_{i,k-1} \bar{a}_{i,k}\,|\ 1\leq k<c\}\cup\{v_i\}$, where we put a bar over $a_{i,k}$ to indicate $a_{i,k}\neq \bar{a}_{i,k}$. In other words, $S_i$ is the set of all sequences that are either $v_i$ or have the same beginning as $v_i$, but then differ from the beginning of $v_i$ by the last bit.  From the definition of $G_c$, we see that $S_i$ is a red clique of size $c$.

It is clear that each vertex of $G_c$ is in some blue clique $C_i$ and in some red clique $S_j$.
Moreover, each blue clique and each red clique intersect. To see this, take a blue clique $C_i$ and a red clique $S_j$. If $i=j$, then $C_i$ and $S_j$ bot contain $v_i$. If $i\neq j$, then let $k$ be the first index where the elements of sequences $v_i$ and $v_j$ are different. It follows that the sequence of the first $k$ elements of $v_i$ is in both $C_i$ and $S_j$.
Hence, the blue cliques $C_i$ and the red cliques $S_j$  form a $(c,c)$-normal cover of the graph $G_c$, which is thus $(c,c)$-normal.
\end{proof}

For the later use, the $(c,c)$-normal cover of the graph $G_c$ as described in the proof of Proposition~\ref{normalno} will be referred to as the \emph{standard normal cover of $G_c$}.

\subsection{Normal graphs with small clique number and independence number}
	\label{lowerR}

In the proof of Theorem~\ref{general} we will use the following two lemmas.

\begin{lemma}
	\label{mnozica}
For any large enough integer $c$, every subset $U$ of vertices of $G_c$ of size $4c$ has at least $4.1c^2$ pairs of vertices that are not connected by a blue edge and at least $4.1c^2$ pairs of vertices that are not connected by a red edge.
\end{lemma}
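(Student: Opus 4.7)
The plan is to upper bound the number of blue edges $B$ in the induced subgraph on $U$, and separately the number of red edges $R$, each by at most $\binom{4c}{2} - 4.1c^2 = 3.9c^2 - 2c$; then the desired non-blue and non-red counts follow immediately.

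Under the binary-string description of $G_c$, a blue edge $\{u,v\}$ with $|u|<|v|$ is exactly the condition that $u$ is a prefix of $v$, i.e., that $u$ is an ancestor of $v$ in the rooted binary tree $T$ underlying $G_c$, while a red edge is the condition that $u$'s sibling in $T$ is an ancestor of $v$. Writing $n_k := |U \cap L_k|$ with $\sum_k n_k = 4c$, the easy counting, summing over each $v \in U$ the at most $\ell(v)-1$ possible blue ancestors, gives the naive bound $B \leq \sum_k (k-1) n_k \leq 4c(c-1)$. This misses the target by only about $0.1c^2$, so a mild refinement is all that is required.

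The main step of my proof would be to use the sharper per-level bound
\[
B \;=\; \sum_{k<l} B_{k,l} \;\leq\; \sum_{k<l} \min\!\bigl(n_k \cdot 2^{l-k-\delta_l},\; n_l\bigr),
\]
where $\delta_l = 1$ if $l=c$ and $0$ otherwise: the first factor reflects that each level-$k$ vertex has only $2^{l-k-\delta_l}$ level-$l$ descendants in $T$, and the second reflects that each level-$l$ vertex has a unique level-$k$ ancestor. Maximizing the right-hand side over $(n_k)$ with $\sum n_k = 4c$ is a clean combinatorial optimization. I would argue that its extremizers are of ``chain plus deep leaves'' type: a root-to-level-$c$ chain $p_1,\dots,p_c$ together with approximately $3c$ level-$c$ vertices selected greedily to maximize their common-prefix length with the chain. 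Since the number of level-$c$ vertices whose longest common prefix with the chain has length exactly $j$ is precisely $2^{c-2-j}$ (for $j \leq c-2$), a direct greedy calculation shows that even this extremal configuration achieves only
\[
B \;\lesssim\; \binom{c}{2} + 3c^2 \;\sim\; \tfrac{7}{2}\,c^2,
\]
which is safely below $3.9c^2 - 2c$ for all sufficiently large $c$. The red bound follows by a symmetric argument, since each $v \in U$ has exactly one red-ancestor candidate at each level shallower than $\ell(v)$, namely the sibling of its level-$k$ prefix, mirroring the blue case.

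The principal obstacle I anticipate is the rigorous justification of the extremal characterization: one must carefully verify that no level distribution $(n_k)$ summing to $4c$ can drive the refined $\min$-bound above $3.9c^2 - 2c$. I expect this to follow from a careful greedy exchange or convexity argument, though the accounting near level $c$, where the $\delta_l$ correction matters and where level-$c$ vertices themselves contribute no descendants, will need some care.
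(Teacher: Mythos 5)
Your reduction of the lemma to the bound $B\le\binom{4c}{2}-4.1c^2=3.9c^2-2c$ on the number of induced blue edges is sound, the per-level inequality $B_{k,l}\le\min\bigl(n_k2^{\,l-k-\delta_l},n_l\bigr)$ is correct, and the red case is indeed symmetric. But the one nontrivial step --- verifying that no integer profile $(n_k)$ with $\sum n_k=4c$ pushes $\sum_{k<l}\min\bigl(n_k2^{\,l-k-\delta_l},n_l\bigr)$ above $3.9c^2-2c$ --- is exactly what you leave as ``I would argue'' and ``I expect'', so the proposal has a genuine gap at its central point. Worse, one of the two routes you suggest for closing it cannot work: the continuous relaxation of this optimization has value $4c^2-\OO(c)$, not $3.5c^2$. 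Indeed, taking fractional $n_c=n_{c-1}=A/2$ and $n_{c-j}=A2^{-j}$ for $j\ge1$ with $A=\tfrac{8}{3}c$ makes every $\min$ equal to $n_l$, giving $\sum_l(l-1)n_l=4c^2-\OO(c)>3.9c^2-2c$. What saves the argument is integrality: a level contributes as a lower index $k$ only if $n_k\ge1$, so supporting all $c$ levels costs a chain of about $c$ vertices and caps the mass near level $c$ at roughly $3c$; this is where your figure $\binom{c}{2}+3c^2\approx 3.5c^2$ really comes from. Any correct completion must make this support-counting explicit (a discrete exchange argument should work), and it is not the ``clean'' convexity computation you describe.

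For comparison, the paper avoids bounding the total number of blue edges altogether. It takes $U$ maximizing the number of induced blue edges, normalizes it by exchanges into a prefix-closed set that is essentially full near the top, deduces that the five highest levels of $U$ contain more than $(2+\tfrac{7}{8})c$ vertices, and notes that these span $\binom{2.875c}{2}-\OO(c)\ge4.1c^2$ non-blue pairs since each such vertex has only $\OO(1)$ blue neighbours among them. So the paper finds the required non-blue pairs inside one well-controlled slab, whereas your route, if completed, would give the stronger global statement $B\le\tfrac{7}{2}c^2+\oo(c^2)$ --- but that completion is precisely the missing work.
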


\begin{proof}
We will give the proof only for blue edges since the red-blue graph obtained from $G_c$ by interchanging red and blue edges is isomorphic to $G_c$ (this follows easily by inspection of the recursive construction).
In the proof, we will identify verices of $G_c$ with the corresponding binary sequences and we will
regularly use the term \emph{height of a vertex} to refer to the length of the sequence corresponding to this vertex.
We say that a vertex $v\in\vozl{G_c}$ is a \emph{predecessor} of a vertex $u\in\vozl{G_c}$, if $u$ and $v$ are connected by a blue edge and the height of $v$ is smaller than the height of $u$. We also say that $u$ is an ancestor of $v$. If $v$ is a  predecessor of $u$ and has height one less than $u$, we say that $v$ is a \emph{direct predecessor} of $u$ and $u$ is a \emph{direct ancestor} of $v$.

Let $U$ be a subset of vertices of $G_c$ that induces the most blue edges among all subsets of vertices of $G_c$ of size $4c$. We can assume the following three properties for the set $U$ (we may need to change $U$ to get these properties, but $U$ will remain a set with $4c$ vertices and it will induce the same number of blue edges as the current $U$).
\begin{enumerate}[a)]
\item \label{a}
{\it For each $u\in U$, all predecessors of $u$ are also in $U$}. If there is some predecessor $v$ of $u$ that is not in $U$, then because the neighbors (we only consider blue edges) of $u$ are also neighbors of $v$, we can replace $u$ by $v$ in $U$ without decreasing the number of blue edges in it. After we do such a transformation once, the sum of heights of vertices in $U$ strictly decreases so after finitely many such transformations all vertices of $U$ will have all their predecessors in $U$.

\item \label{b}
{\it If a vertex $v\in U$ of height $k$ has no direct ancestors in $U$, then all other vertices of $U$ of height at least $k$ have all their ancestors in $U$}. If this was not the case, then there exists a vertex $u\not\in U$ of height at least $k+1$ that is a direct ancestor of some vertex from $U$. By~\ref{a}), $u$ has strictly more neighbors in $U$ than $v$, thus if we substitute $v$ for $u$ in $U$ we get strictly more blue edges in $U$, which is a contradiction with the choice of $U$.

\item \label{c}
{\it For each $k$, $1\leq k < c-1$, $U$ has at most one vertex of height $k$ that does not have two direct ancestors}. If, for some $k$, there are two such vertices in $U$, say $u$ and $v$, then by~\ref{b}) it must be the case that $u$ and $v$ each has exactly one direct ancestor in $U$.
Hence, we can replace the direct ancestor of $v$ in $U$ and all of its ancestors in $U$ with the direct ancestor of $u$ which is not in $U$ and with its corresponding ancestors.
With this trasformation of $U$, the number of vertices in $U$ as well as the number of blue edges induced by $U$ is preserved. Furthermore, the assumption~\ref{a}) is preserved and, for each $l\neq k$, $1\leq l<c-1$, the number of vertices of height $l$ in $U$ that do not have two direct ancestors is also preserved. Because $v$ now has no direct ancestors in $U$, it must be the case that all other vertices of height $k$ in $U$ have exactly two direct ancestors in $U$ by~\ref{b}).
Because we can do the same transformation for all $k$, we proved that~\ref{c}) can indeed be assumed.
\end{enumerate}

For $i=1,2,\ldots,c$, let $x_i$ be the number of vertices of height $i$ in $U$. With assumptions~\ref{b}) and~\ref{c}) in hand, we can argue that $x_c$ is large. Let a vertex $v\in U$ be a lowest vertex from $U$ that has no direct ancestors and let $k$ be the height of $v$. If $k\geq c-1$, then using~\ref{b}) and~\ref{c}) we can prove by induction on $c-i$ that, for all $1\leq i<c$, it holds
$$x_i\leq \left\lceil\frac{x_c+1}{2^{c-i-1}}\right\rceil.$$
If $k<c-1$, then using  induction on $c-i$ we get
\begin{align*}
x_i&=\frac{x_c}{2^{c-i-1}} &&\textrm{if }k<i<c&&\textrm{ (use~\ref{a}) and~\ref{b}) to prove that)}
\end{align*}
and
\begin{align*}
x_i&\leq\left\lceil\frac{x_{k+1}/2+1}{2^{k-i}}\right\rceil &&\textrm{if } i\leq k &&\textrm{ (use~\ref{b}) and~\ref{c}) to prove that)}.
\end{align*}
\noindent In the second case, it furthermore holds
\begin{align}
	\label{xi_neenakost}
x_i\leq\left\lceil\frac{x_{k+1}/2+1}{2^{k-i}}\right\rceil \leq \left\lfloor\frac{x_{k+1}}{2^{k+1-i}}\right\rfloor+1.
\end{align}
The second inequality in~\eqref{xi_neenakost} is easy to see using the fact that $x_{k+1}$ is even, which follows from~\ref{b}).
In any case, we have
$$x_i\leq \frac{x_c}{2^{c-i-1}}+1$$
for all $i$, which gives
$$4c\leq x_c + \Big(x_c+1\Big)+\Big(\frac{x_c}{2}+1\Big)+\Big(\frac{x_c}{4}+1\Big)+\cdots+\Big(\frac{x_c}{2^{c-2}}+1\Big)< 3x_c+c$$
and hence $x_c > c$.

So we know that there are at least $c$ vertices in $U$ of height $c$, which implies that there are at least $c$ vertices in $U$ of height $c-1$ and $\lceil\frac{c}{2}\rceil$ vertices of height $c-2$ etc. Let $X\subseteq U$ be the set of all vertices in $U$ of height at least $c-4$. Because each vertex in $X$ has at most 22 possible ancestors, the set $X$ induces only $\OO(c)$ blue edges. Hence, there are at least $$\binom{2c+\lceil\frac{c}{2}\rceil+\lceil\frac{c}{4}\rceil+\lceil\frac{c}{8}\rceil}{2}-\OO(c)\geq4.13c^2-\OO(c)$$ pairs of vertices from $X$ not connected by a blue edge.
Hence, for large enough $c$, we have at least $4.1c^2$ pairs of vertices in $U$ that are not connected by a blue edge. This completes the proof.
\end{proof}


\begin{lemma}
	\label{generalLemma}
For every large enough integer $c$, there exists a graph $G$ of type $G_c$ such that $\neodv{G}<4c$ and $\klika{G}<4c$.
\end{lemma}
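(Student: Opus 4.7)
The plan is to use the probabilistic method on the ``free'' pairs of $G_c$, namely those pairs of vertices that are neither joined by a red nor a blue edge in $G_c$. Let $G$ be a random graph of type $G_c$ obtained by keeping every blue edge, omitting every red edge, and independently including each free pair as an edge with probability $\tfrac12$. I claim that for all sufficiently large $c$, with positive probability $\klika{G}<4c$ and $\neodv{G}<4c$; this immediately yields the existence of the required deterministic graph.

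Fix a set $U\subseteq\vozl{G_c}$ with $|U|=4c$. If $U$ induces a clique in $G$, then $U$ can contain no red pair of $G_c$ (since red pairs are forbidden to be edges in any graph of type $G_c$), so every non-blue pair inside $U$ must be free and must be realized as an edge. By the blue part of Lemma~\ref{mnozica} there are at least $4.1c^2$ such pairs, each realized as an edge independently with probability $\tfrac12$, giving
\[
\Pr[U\text{ induces a clique in }G]\le 2^{-4.1c^2}.
\]
Symmetrically, if $U$ induces an independent set in $G$, then $U$ contains no blue pair, every non-red pair in $U$ must be free and realized as a non-edge, and the red part of Lemma~\ref{mnozica} yields $\Pr[U\text{ induces an independent set in }G]\le 2^{-4.1c^2}$.

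Finally I would apply a union bound over the $\binom{3\cdot2^{c-1}-2}{4c}$ choices of $U$. Using $|\vozl{G_c}|<2^{c+1}$, this count is at most $(2^{c+1})^{4c}=2^{4c^2+4c}$, so
\[
\Pr[\klika{G}\ge 4c\ \text{or}\ \neodv{G}\ge 4c]\le 2\cdot 2^{4c^2+4c}\cdot 2^{-4.1c^2}=2^{-0.1c^2+4c+1},
\]
which is strictly less than $1$ for all sufficiently large $c$. Hence some graph of type $G_c$ realizes both $\klika{G}<4c$ and $\neodv{G}<4c$, proving the lemma.

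The main obstacle is entirely contained in Lemma~\ref{mnozica}: the $4.1c^2$ lower bound on the number of non-blue (respectively, non-red) pairs in every $4c$-subset is exactly what produces the $0.1c^2$ gap in the exponent above the $\sim 4c^2$ contribution coming from counting $4c$-subsets of a vertex set of size $\approx 2^c$. Once that structural bound is available, the probabilistic step described above is routine.
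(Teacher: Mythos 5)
Your proposal is correct and follows essentially the same route as the paper: the same random model on the free pairs of $G_c$, the same application of Lemma~\ref{mnozica} to get the $2^{-4.1c^2}$ bound per $4c$-subset, and the same union bound over all $\binom{n}{4c}$ subsets using $n<2^{c+1}$ (the paper phrases this last step via $c\geq\log n-1$, but the arithmetic is identical). No gaps.
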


\begin{proof}
For a given integer $c$, let $G$ be a random graph of type $G_c$ which is obtained from $G_c$ by adding each missing edge of $G_c$ with probability $\frac{1}{2}$ independently for each edge, removing red edges and uncoloring blue edges. Let $n$ be the number of vertices of $G$. By Lemma~\ref{mnozica}, we get that, for large enough $c$,
\begin{align*}
\Pr\left[\klika{G}\geq 4c\right]
&\leq \sum_{\substack{U\subseteq \vozl{G}\\ |U|=4c}}\Pr\left[U\textrm{ induces a clique}\right]\\
&\leq \binom{n}{4c} 2^{-4.1c^2}.
\end{align*}
By Proposition~\ref{normalno} we have $c\geq\log{n}-1$. Therefore,
$$
\Pr\left[\klika{G}\geq 4c\right] \leq n^{4c}\, 2^{-4.1c\log n+4.1c} = 2^{4.1c}n^{-0.1c},
$$
which converges to 0 when $c$ tends to infinity. Hence, for large enough $c$,
$$\Pr\left[\klika{G}\geq 4c\right]<\frac{1}{2}.$$
From the symmetry between blue and red edges in the recursive definition of $G_c$ it follows that
$$\Pr\left[\neodv{G}\geq 4c\right]<\frac{1}{2}.$$
Hence, for large enough $c$,
$$\Pr\left[G \textrm{ has a stable set or a clique of size } 4c\right]<1.$$
This means that there exists a graph $G$ of type $G_c$ that has no clique or stable set of size $4c$.
\end{proof}

\begin{proof}[Proof of Theorem~\ref{general}.]
Let $n$ be a positive integer and let $c$ be such that $|\vozl{G_{c-1}}| < n \le |\vozl{G_c}|$. By Lemma~\ref{generalLemma}, there exists a graph $G$ of type $G_{c}$ such that $\neodv{G}=\OO(c)$ and $\klika{G}=\OO(c)$. We will make a serries of removals of vertices of $G$  until we get a graph with $n$, $n-1$ or $n-2$ vertices. Each time we will remove three vertices as can be seen in Figure~\ref{G42}. It is not hard to see that removing three by three vertices this way preserves normality of the graph.

\begin{figure}[hbt]
	\begin{center}
	\begin{tikzpicture}[scale=0.5]
	\tikzstyle{vrata}=[circle,draw,inner sep=0pt, minimum size = 0.2em]
	\tikzstyle{vrata2}=[circle,draw,inner sep=1pt, minimum size = 0.5em]
	\tikzstyle{modra}=[-, blue]
	\tikzstyle{rdeca}=[dashed,red,-]
	\node (s1) at ( -7,-1) [vrata, label = below:$v_1$, opacity=0.1] {};
	\node (s11) at ( -7,-1) [vrata2, opacity=0.5] {};
	\node (s2) at ( -5,-1) [vrata, label = below:$v_2$, opacity=0.1] {};
	\node (s22) at ( -5,-1) [vrata2, opacity=0.5] {};
	\node (s3) at ( -3,-1) [vrata, label = below:$v_3$, opacity=0.1] {};
	\node (s33) at ( -3,-1) [vrata2, opacity=0.5] {};
	\node (s4) at ( -1,-1) [vrata, label = below:$v_4$, opacity=0.1] {};
	\node (s44) at ( -1,-1) [vrata2, opacity=0.5] {};
	\node (s5) at ( 1,-1) [vrata, label = below:$v_5$, opacity=0.1] {};
	\node (s55) at ( 1,-1) [vrata2, opacity=0.5] {};
	\node (s6) at ( 3,-1) [vrata, label = below:$v_6$, opacity=0.1] {};
	\node (s66) at ( 3,-1) [vrata2, opacity=0.5] {};
	\node (s7) at ( 5,-1) [vrata, label = below:$v_7$] {};
	\node (s8) at ( 7,-1) [vrata, label = below:$v_8$] {};
	\node (p1) at ( -7,0) [vrata] {}
			edge [modra, opacity=0.1] (s1)
			edge [rdeca, opacity=0.1] (s2);
	\node (p2) at ( -5,0) [vrata, opacity=0.1] {}
			edge [modra, opacity=0.1] (s2)
			edge [rdeca, opacity=0.1] (s1);
	\node (p22) at ( -5,0) [vrata2, opacity=0.5] {};
	\node (p3) at ( -3,0) [vrata] {}
			edge [modra, opacity=0.1] (s3)
			edge [rdeca, opacity=0.1] (s4);
	\node (p4) at ( -1,0) [vrata, opacity=0.1] {}
			edge [modra, opacity=0.1] (s4)
			edge [rdeca, opacity=0.1] (s3);
	\node (p44) at ( -1,0) [vrata2, opacity=0.5] {};
	\node (p5) at ( 1,0) [vrata] {}
			edge [modra, opacity=0.1] (s5)
			edge [rdeca, opacity=0.1] (s6);
	\node (p6) at ( 3,0) [vrata, opacity=0.1] {}
			edge [modra, opacity=0.1] (s6)
			edge [rdeca, opacity=0.1] (s5);
	\node (p66) at ( 3,0) [vrata2, opacity=0.5] {};
	\node (p7) at ( 5,0) [vrata] {}
			edge [modra] (s7)
			edge [rdeca] (s8);
	\node (p8) at ( 7,0) [vrata] {}
			edge [modra] (s8)
			edge [rdeca] (s7);
	\node (e1) at ( -6,2.5) [vrata] {}
			edge [modra, opacity=0.1] (s1)
			edge [modra, opacity=0.1] (s2)
			edge [modra] (p1)
			edge [modra, opacity=0.1] (p2)
			edge [rdeca, opacity=0.1] (s3)
			edge [rdeca, opacity=0.1] (s4)
			edge [rdeca] (p3)
			edge [rdeca, opacity=0.1] (p4);
	\node (e2) at ( -2,2.5) [vrata] {}
			edge [modra, opacity=0.1] (s3)
			edge [modra, opacity=0.1] (s4)
			edge [modra] (p3)
			edge [modra, opacity=0.1] (p4)
			edge [rdeca, opacity=0.1] (s1)
			edge [rdeca, opacity=0.1] (s2)
			edge [rdeca] (p1)
			edge [rdeca, opacity=0.1] (p2);
	\node (e3) at (2,2.5) [vrata] {}
			edge [modra, opacity=0.1] (s5)
			edge [modra, opacity=0.1] (s6)
			edge [modra] (p5)
			edge [modra, opacity=0.1] (p6)
			edge [rdeca] (s7)
			edge [rdeca] (s8)
			edge [rdeca] (p7)
			edge [rdeca] (p8);
	\node (e4) at ( 6,2.5) [vrata] {}
			edge [modra] (s7)
			edge [modra] (s8)
			edge [modra] (p7)
			edge [modra] (p8)
			edge [rdeca, opacity=0.1] (s5)
			edge [rdeca, opacity=0.1] (s6)
			edge [rdeca] (p5)
			edge [rdeca, opacity=0.1] (p6);
	\node (m1) at ( -2,7) [vrata, label = above left:$0$] {}
			edge [modra, opacity=0.1] (s1)
			edge [modra, opacity=0.1] (s2)
			edge [modra, opacity=0.1] (s3)
			edge [modra, opacity=0.1] (s4)
			edge [modra] (p1)
			edge [modra, opacity=0.1] (p2)
			edge [modra] (p3)
			edge [modra, opacity=0.1] (p4)
			edge [modra] (e1)
			edge [modra] (e2)
			edge [rdeca, opacity=0.1] (s5)
			edge [rdeca, opacity=0.1] (s6)
			edge [rdeca] (s7)
			edge [rdeca] (s8)
			edge [rdeca] (p5)
			edge [rdeca, opacity=0.1] (p6)
			edge [rdeca] (p7)
			edge [rdeca] (p8)
			edge [rdeca] (e3)
			edge [rdeca] (e4);
	\node (m2) at ( 2,7) [vrata, label = above left:$1$] {}
			edge [modra, opacity=0.1] (s5)
			edge [modra, opacity=0.1] (s6)
			edge [modra] (s7)
			edge [modra] (s8)
			edge [modra] (p5)
			edge [modra, opacity=0.1] (p6)
			edge [modra] (p7)
			edge [modra] (p8)
			edge [modra] (e3)
			edge [modra] (e4)
			edge [rdeca, opacity=0.1] (s1)
			edge [rdeca, opacity=0.1] (s2)
			edge [rdeca, opacity=0.1] (s3)
			edge [rdeca, opacity=0.1] (s4)
			edge [rdeca] (p1)
			edge [rdeca, opacity=0.1] (p2)
			edge [rdeca] (p3)
			edge [rdeca, opacity=0.1] (p4)
			edge [rdeca] (e1)
			edge [rdeca] (e2);

	\fill[fill=white] (4,0) ellipse (4.1 and 3);
	\node (s5) at ( 1,-1) [vrata, label = below:$v_5$, opacity=0.1] {};
	\node (s55) at ( 1,-1) [vrata2, opacity=0.5] {};
	\node (s6) at ( 3,-1) [vrata, label = below:$v_6$, opacity=0.1] {};
	\node (s66) at ( 3,-1) [vrata2, opacity=0.5] {};
	\node (s7) at ( 5,-1) [vrata, label = below:$v_7$] {};
	\node (s8) at ( 7,-1) [vrata, label = below:$v_8$] {};
	\node (p5) at ( 1,0) [vrata] {}
			edge [modra, opacity=0.1] (s5)
			edge [rdeca, opacity=0.1] (s6);
	\node (p6) at ( 3,0) [vrata, opacity=0.1] {}
			edge [modra, opacity=0.1] (s6)
			edge [rdeca, opacity=0.1] (s5);
	\node (p66) at ( 3,0) [vrata2, opacity=0.5] {};
	\node (p7) at ( 5,0) [vrata] {}
			edge [modra] (s7)
			edge [rdeca] (s8);
	\node (p8) at ( 7,0) [vrata] {}
			edge [modra] (s8)
			edge [rdeca] (s7);
	\node (e3) at (2,2.5) [vrata] {}
			edge [modra, opacity=0.1] (s5)
			edge [modra, opacity=0.1] (s6)
			edge [modra] (p5)
			edge [modra, opacity=0.1] (p6)
			edge [rdeca] (s7)
			edge [rdeca] (s8)
			edge [rdeca] (p7)
			edge [rdeca] (p8);
	\node (e4) at ( 6,2.5) [vrata] {}
			edge [modra] (s7)
			edge [modra] (s8)
			edge [modra] (p7)
			edge [modra] (p8)
			edge [rdeca, opacity=0.1] (s5)
			edge [rdeca, opacity=0.1] (s6)
			edge [rdeca] (p5)
			edge [rdeca, opacity=0.1] (p6);
	\end{tikzpicture}
	\end{center}
\vspace{-30pt}
	\captionof{figure}{The graph $G_4$ with three triples of vertices (circled) removed. If we would also remove the fourth triple, we would get $G_3$.}
	\label{G42}
\end{figure}
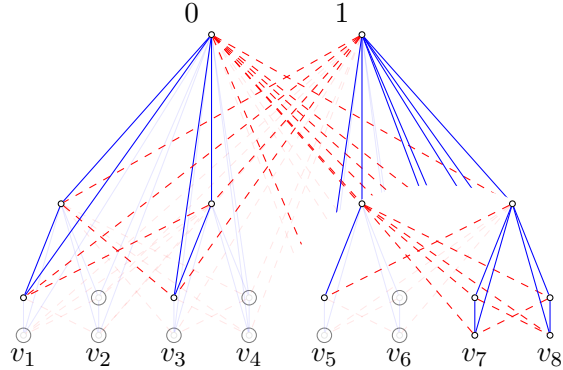

Let $G'$ be the resulting graph with $n$, $n-1$ or $n-2$ vertices. Since $G'$ is an induced subgraph of $G$, it holds $\neodv{G'}=\OO(c)$ and $\klika{G'}=\OO(c)$.
Next, we add at most two new vertices $C$ and $D$ to the graph $G'$, together with all possible edges from $C$ and $D$ except of the edge $CD$, to get an $n$-vertex graph $G''$. Clearly, this can increase $\alpha$ or $\omega$ by at most 2. We leave it to the reader to show that $G''$ is a normal graph. By Proposition~\ref{normalno} we know that $c=\TT(\log n)$. By our construction of the graph $G''$ it follows that $\neodv{G''}=\OOm(\log n)$ and $\klika{G''}=\OOm(\log n)$, hence $\neodv{G''}=\TT(\log n)$ and $\klika{G''}=\TT(\log n)$.
\end{proof}

\subsection{An improved lower bound for $N(c,c)$}
	\label{lowerF}
We have shown in Proposition~\ref{normalno} that $N(c,c)\geq 3\cdot2^{c-1}-2$. Here we improve this bound.

For a positive integer $c$, define a red-blue graph $F_c$ as $F_1=G_1$, $F_2=G_2$ and, for $c>2$, connect five copies of $F_{c-2}$ as shown in Figure~\ref{Fa}, where each edge between a copy of $F_{c-2}$ and a vertex in $\{A,B,C,D,E\}$ indicates the full red/blue join. Using induction on $c$, we can verify that the graphs $F_c$ are $(c,c)$-normal: a $(c,c)$-normal cover of $F_c$ consists of the union of normal covers of graphs $F_{c-2}$ where we add two vertices to each clique and two vertices to each stable set (which vertices we add can be seen from Figure~\ref{Fa}, there is only one option). We leave it to the reader to show that this is indeed a normal cover. We call it the ~\emph{standard normal cover of $F_c$}.

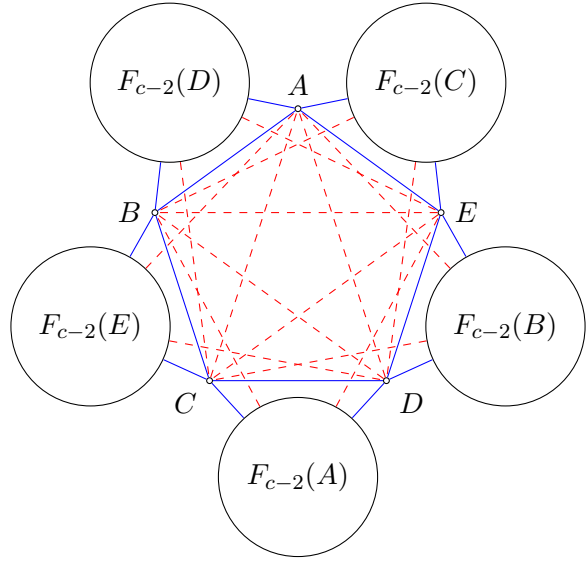
\begin{figure}[hbt]
\begin{center}
	\begin{tikzpicture}[scale=0.5]

\def \radius {4cm}
\def \radiuss {5.8cm}

	\tikzstyle{vrata}=[circle,draw,inner sep=0pt, minimum size = 0.2em]
	\tikzstyle{modra}=[-, blue]
	\tikzstyle{rdeca}=[dashed,red,-]
	\tikzstyle{poddrevo}=[circle,draw,inner sep=0pt, shape border rotate=90, minimum height=5.5em,minimum width=5.5em]

\node (A) [vrata, label = above:$A$] at ({360/5 * (1 - 1)+90}:\radius){};
\node (B) [vrata, label = left:$B$] at ({360/5 * (2 - 1)+90}:\radius){}
	edge [modra] (A);
\node (C) [vrata, label = below left:$C$] at ({360/5 * (3 - 1)+90}:\radius){}
	edge [rdeca] (A)
	edge [modra] (B);
\node (D) [vrata, label = below right:$D$] at ({360/5 * (4 - 1)+90}:\radius){}
	edge [modra] (C)
	edge [rdeca] (B)
	edge [rdeca] (A);
\node (E) [vrata, label = right:$E$] at ({360/5 * (5 - 1)+90}:\radius){}
	edge [modra] (D)
	edge [modra] (A)
	edge [rdeca] (C)
	edge [rdeca] (B);

\node[poddrevo] at ({360/5 * (1 - 1)+36+90}:\radiuss){$F_{c-2}(D)$}
	edge [modra] (A)
	edge [modra] (B)
	edge [rdeca] (C)
	edge [rdeca] (E);
\node[poddrevo] at ({360/5 * (2 - 1)+36+90}:\radiuss){$F_{c-2}(E)$}
	edge [modra] (B)
	edge [modra] (C)
	edge [rdeca] (A)
	edge [rdeca] (D);
\node[poddrevo] at ({360/5 * (3 - 1)+36+90}:\radiuss){$F_{c-2}(A)$}
	edge [modra] (C)
	edge [modra] (D)
	edge [rdeca] (B)
	edge [rdeca] (E);
\node[poddrevo] at ({360/5 * (4 - 1)+36+90}:\radiuss){$F_{c-2}(B)$}
	edge [modra] (D)
	edge [modra] (E)
	edge [rdeca] (C)
	edge [rdeca] (A);
\node[poddrevo] at ({360/5 * (5 - 1)+36+90}:\radiuss){$F_{c-2}(C)$}
	edge [modra] (E)
	edge [modra] (A)
	edge [rdeca] (D)
	edge [rdeca] (B);
\end{tikzpicture}
\caption{The recursive construction of $F_c$ from five copies of $F_{c-2}$. The blue edge between $A$ and $F_{c-2}(C)$ represents all possible blue edges between $A$ and vertices in $F_{c-2}(C)$. The same holds for any other edge shown in the picture.}
	\label{Fa}
	\end{center}
\end{figure}

Now we can prove Theorem~\ref{lowerBound}, which states:
\begin{center}
\textit{For all integers $c\geq 6$, $N(c,c)\geq\sqrt{5}^{c}$. Additionally, $N(5,5)\geq 55$ and $N(4,4)\geq 25$.}
\end{center}

\begin{proof}[Proof of Theorem~\ref{lowerBound}.]
If we denote $f_c=|\vozl{F_c}|$, then we have the recursion $f_1=1$, $f_2=4$, $f_c=5f_{c-2}+5$. This shows that $N(5,5)\geq 55$ and $N(4,4)\geq 25$. For  $c\geq 6$, the inequality $f_c\geq\sqrt{5}^{c}$ follows easily from $f_c=5f_{c-2}+5$ by induction (the basis is for $c=6$ and $c=7$).
\end{proof}

To conclude this section, let us discuss a special property that tells something about the optimality of the construction of graphs $F_c$ and prevents a simple operation that would improve the lower bound of Theorem \ref{lowerBound}.

\begin{proposition}
	\label{optimalno}
 For each positive integer $c$, if a subset of\/ $\vozl{F_c}$ of size at most $c$ does not induce a red edge, but intersects all red cliques of the standard normal cover of $F_c$, then it is of size $c$ and has at least $c-1$ common vertices with some blue clique of the standard normal cover of $F_c$.
\end{proposition}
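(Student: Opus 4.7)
The plan is to prove Proposition~\ref{optimalno} by induction on $c$ in steps of two, exploiting the recursive construction of $F_c$ from five copies of $F_{c-2}$ attached to the central pentagon on $\{A,B,C,D,E\}$. The base cases $c=1$ and $c=2$ are handled by direct inspection of the few candidate sets $T$. For the inductive step with $c\ge 3$, given any $T$ satisfying the hypotheses, I partition $T = T_P \sqcup T_A \sqcup T_B \sqcup T_C \sqcup T_D \sqcup T_E$, where $T_P = T \cap \{A,B,C,D,E\}$ and $T_X = T \cap \vozl{F_{c-2}(X)}$. Since the red edges on the pentagon form a $C_5$ (the pentagram), whose independence number is $2$, the no-red-edge condition forces $|T_P|\le 2$, and if $|T_P|=2$ then $T_P$ is a blue edge of the pentagon.

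Every red clique of the standard cover of $F_c$ has the form $S\cup\{X,Y\}$, where $S$ is a red clique in the standard cover of some copy $F_{c-2}(Z)$ and $\{X,Y\}$ is the pair of pentagon vertices red-joined to $F_{c-2}(Z)$. Whenever $T_P\cap\{X,Y\}=\emptyset$, the set $T_Z$ alone must meet every such $S$, so $T_Z$ satisfies the proposition's hypotheses inside $F_{c-2}(Z)$, and therefore $|T_Z|\ge c-2$ (either by the inductive hypothesis when $|T_Z|\le c-2$, or trivially when $|T_Z|\ge c-1$). A short count shows that the number of unhelped copies is $5-2|T_P|$. Thus $|T_P|=0$ gives $|T|\ge 5(c-2)$ and $|T_P|=1$ gives $|T|\ge 1+3(c-2)=3c-5$; both exceed $c$ when $c\ge 3$, ruling out $|T_P|\le 1$.

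Hence $|T_P|=2$, and by the rotational $\ZZ_5$-symmetry of the construction we may assume $T_P=\{A,B\}$. Reading off the join pattern in Figure~\ref{Fa}, the four copies $F_{c-2}(A), F_{c-2}(B), F_{c-2}(C), F_{c-2}(E)$ are each red-joined to at least one of $A$ or $B$, so $T_A=T_B=T_C=T_E=\emptyset$ by the no-red-edge condition; the fifth copy $F_{c-2}(D)$ is blue-joined to both $A$ and $B$, and the only red cliques of $F_c$ not already met by $\{A,B\}$ are those coming from $F_{c-2}(D)$. Consequently $T_D$ satisfies the proposition's hypotheses inside $F_{c-2}(D)$ with $|T_D|\le c-2$, and the inductive hypothesis delivers $|T_D|=c-2$ together with a blue clique $K$ of the standard cover of $F_{c-2}(D)$ with $|T_D\cap K|\ge c-3$. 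Then $K\cup\{A,B\}$ is a blue clique of the standard cover of $F_c$, and $|T\cap(K\cup\{A,B\})|\ge (c-3)+2=c-1$, while $|T|=2+(c-2)=c$, completing the induction.

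The main obstacle I expect is the careful bookkeeping: tracing which pair of pentagon vertices is appended to each copy's red and blue cliques in the standard cover, and verifying the symmetry reduction to $T_P=\{A,B\}$ precisely. Once this structure is pinned down, the counting in the cases $|T_P|\le 1$ and the identification of $T_D$ as a set satisfying the hypotheses inside $F_{c-2}$ are routine applications of the inductive hypothesis.
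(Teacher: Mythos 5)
Your proof is correct and follows essentially the same route as the paper's: induction in steps of two via the pentagon decomposition, a case analysis on the number of pentagon vertices in the set (ruling out $0$ and $1$ by counting against the inductive hypothesis), and reduction to the single copy $F_{c-2}(D)$ when the set contains a blue-adjacent pair such as $\{A,B\}$. Your explicit summation over unhelped copies and the remark that the inductive hypothesis forces $|T_Z|\ge c-2$ in either case are just slightly more detailed phrasings of the paper's pigeonhole step.
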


Note that Proposition~\ref{optimalno} does not hold for graphs $G_c$. Take, for example, $G_5$ and let $X=\{0,1\}$ and $Y=\{00, 01, 10, 11\}$ be vertices of height 1 and 2, respectively. Note that $X$ and $Y$ both intersect all red and all blue cliques of the standard normal cover, while they do not induce any edge. We can connect vertices of $X$ with a blue edge and vertices of $Y$ with red edges and add a new vertex $v$ to $G_c$, connect it to $X$ with blue edges and to $Y$ with red edges. The new red-blue graph is also $(5,5)$-normal and has more vertices than $G_5$. It is clear that we could add in a similar way even two more vertices to increase the blue clique $\{0,1,v\}$ to size 5 and the graph would still be $(5,5)$-normal. 

\begin{proof}
For $c=1$ and $c=2$, the proposition is trivially true. Actually, as we will see below, the case $c=2$ is essentially the only reason for why it can happen that a set of size at most $c$ that does not induce a red edge, but intersects all red cliques of the standard normal cover of $F_c$, can have only $c-1$ common vertices with some blue clique of the standard normal cover of $F_c$.

Assume $c>2$ and assume that the proposition holds for $F_{c-2}$.
Let $U$ be a subset of $\vozl{F_c}$ of size at most $c$ that does not induce a red edge, but intersects all red cliques of the standard normal cover of $F_c$. Using notation from Figure~\ref{Fa}, $U$ contains at most two vertices from the set $\{A,B,C,D,E\}$, because any three vertices from this set induce at least one red edge.

If $U$ contains no vertex in $\{A,B,C,D,E\}$, then we have to cover all red cliques of each of five copies of $F_{c-2}$ independently. Hence, there exists a copy of $F_{c-2}$ for which we can cover all of its red cliques with a set of size at most $c/5$ that does not induce a red edge. Because the proposition holds for $F_{c-2}$, we have $c/5\geq c-2$, which means $c\leq 2$ (which is false). We conclude that $U$ contains at least one vertex from $\{A,B,C,D,E\}$.

If $U$ contains exactly one vertex from $\{A,B,C,D,E\}$, then we see in Figure~\ref{Fa} that there exist at least three copies of $F_{c-2}$ for which $U$ has to cover all their red cliques with at most $c-1$ vertices, meaning that there exists a copy of $F_{c-2}$ for which $U$ has to cover all of its red cliques with at most $(c-1)/3$ vertices (not inducing a red edge). But we assumed that the proposition holds for $F_{c-2}$, hence $(c-1)/3\geq c-2$, which means $c\leq 2$, which is false.

It follows that $U$ contains exactly 2 vertices from $\{A,B,C,D,E\}$.
Because of the symmetry we may assume that $U$ contains $A$ and $B$. By Figure~\ref{Fa}, $U\backslash\{A,B\}$ has to cover all stable sets of $F_{c-2}(D)$ and we know that it does not induce a red edge. By induction hypothesis, $U\backslash\{A,B\}$ is of size $c-2$ and has at least $c-3$ vertices in some blue clique $Q$ of the standard normal cover of $F_{c-2}(D)$. It follows that $U$ is of size $c$ and it has at least $c-1$ common vertices with the blue clique $Q\cup \{A,B\}$ that belongs to the standard normal cover of $F_c$.
\end{proof}

\section{An upper bound on $N(c,s)$}
\label{upper}

The red-blue graphs constructed in the previous section all admit $(c,c)$-normal covers for some $c=\TT(\log n)$.
In this section we prove that we cannot do better than that. More specifically, we prove Theorem~\ref{upperBound} confirming that $N(c,s)\leq2^{c+s}$.

Before going to the proof, we need some preparation. Let $G$ be a $(c,s)$-normal graph with a $(c,s)$-normal cover $(\mathcal{C},\mathcal{S})$. For a clique $C\in\mathcal{C}$, we say that a vertex $v\in C$ is \emph{private to} $C$ if it is in no other clique from $\mathcal{C}$. Similarly, for a stable set $S\in\mathcal{S}$, we say that a vertex $v\in S$ is \emph{private to} $S$ if it belongs to no other stable set from $\mathcal{S}$. We say that a $(c,s)$-normal cover $(\mathcal{C},\mathcal{S})$ of $G$ is \emph{minimal} if no proper subset of $\mathcal{C}$ is a clique cover and no proper subset of $\mathcal{S}$ is a stable set cover of $G$. Note that $(\mathcal{C},\mathcal{S})$ is minimal
if and only if each clique in $\mathcal{C}$ has a private vertex and each stable set in $\mathcal{S}$ has a private vertex.
Clearly, each $(c,s)$-normal graph has a minimal $(c,s)$-normal cover.

If we have a minimal normal cover of $G$, we can use the algorithm \textsc{C-reduce} to reduce the size of $G$ without affecting normality. Note that there is no condition in the algorithm on the input cover to be minimal.


\begin{algorithm}[h]
\DontPrintSemicolon
\KwIn{A graph $G$ with a normal cover $(\mathcal{C},\mathcal{S})$, a clique $C\in\mathcal{C}$ and a vertex $v$ that is private to $C$.}
	Let $P$ be the set of all private vertices of $C$.\;
	\If{$|P|=1$}{
		Remove $v$ from $G$ and from all stable sets in $\mathcal{S}$.\;
		Remove $C$ from $\mathcal{C}$.\;
	}
	\Else{Because $v\in P$ and $|P|\ne1$, there exists $v' \in P\setminus\{v\}$.\;
		 Remove all edges in $G$ that join $v$ or $v'$ with a vertex outside $C$.\;
		 Contract the edge $vv'$.\label{contract}
	}
\caption{\textsc{C-reduce}}
\label{alg1}
\end{algorithm}


\begin{claim}
	\label{algoritem}
The cover $(\mathcal{C},\mathcal{S})$ remains a normal cover of the resulting graph after the execution of \textsc{C-reduce}.
\end{claim}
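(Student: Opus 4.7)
The plan is to verify the three conditions defining a normal cover---namely, that $\mathcal{C}$ covers the vertex set, $\mathcal{S}$ covers the vertex set, and every clique in $\mathcal{C}$ shares a vertex with every stable set in $\mathcal{S}$---for each of the two branches of \textsc{C-reduce}. Two background facts will do most of the work: a vertex private to $C$ lies in no clique of $\mathcal{C}$ other than $C$, and any stable set $S \in \mathcal{S}$ intersects the clique $C$ in at most one vertex.

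In the branch $|P| = 1$, the vertex $v$ and the clique $C$ are both removed. Every remaining vertex of $C$ lies in some clique of $\mathcal{C} \setminus \{C\}$ by the definition of $P$, so the updated $\mathcal{C}$ still covers the vertex set; the updated $\mathcal{S}$ does too because only $v$ itself was deleted from the sets. For the intersection property, any original witness in $C' \cap S$ with $C' \neq C$ cannot equal $\{v\}$, since $v$ is private to $C$, so at least one witness vertex survives.

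In the branch $|P| \geq 2$, I would first observe that removing edges at $v$ or $v'$ that leave $C$ destroys no other clique of $\mathcal{C}$ (such an edge inside some $C'' \in \mathcal{C}$ would force $v$ or $v'$ into $C''$, contradicting privacy) and does not affect stability, since stable sets only become ``more stable'' under edge removal. Then, after contracting $vv'$ to a new vertex $v^*$, update $\mathcal{C}$ and $\mathcal{S}$ by replacing $v$ and $v'$ with $v^*$ wherever they occur; this is well-defined because no stable set contains both $v$ and $v'$, which are adjacent in $C$. A short calculation shows that the neighborhood of $v^*$ in the modified graph is exactly $C \setminus \{v,v'\}$. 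Using this, the updated $C$ remains a clique; and for any stable set $S$ that originally contained, say, $v$, the fact that $S \cap C = \{v\}$ forces the updated $S$ to miss every neighbor of $v^*$, so $S$ remains stable. Coverage is inherited vertex by vertex (with $v^*$ sitting in the updated $C$ and in the update of any stable set that contained $v$), and the intersection property follows by tracking each original witness: for $C'' \neq C$ the witness cannot be $v$ or $v'$, so it survives unchanged, and for $C'' = C$ a witness in $\{v,v'\}$ is mapped to $v^*$, which lies in both updated sets. The main obstacle in the argument is precisely this verification that stability is preserved under the contraction in the second branch, and it rests on the combined effect of the edge-removal step, which traps the neighborhood of $v^*$ inside $C$, together with the clique--stable set property $|S \cap C| \le 1$.
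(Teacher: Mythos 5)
Your proof is correct and follows essentially the same route as the paper's: a case split on $|P|$, using privacy of $v$ (and $v'$) to preserve the clique cover, and using the removal of edges leaving $C$ together with the fact that a stable set meets the clique $C$ in at most one vertex to preserve stability after the contraction. You spell out the verification of the intersection property and the coverage conditions in more detail than the paper does, but the underlying argument is identical.
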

\begin{proof}
If $|P|=1$, then it is clear that $\mathcal{C}$ remains a cover of $G-v$ by cliques. Because the clique $C$ was the only clique from $\mathcal{C}$ that included $v$, $(\mathcal{C},\mathcal{S})$ remains a normal cover.

If $|P|> 1$, then because $v$ and $v'$ were private to $C$, $\mathcal{C}$ remains a cover of $G$ by cliques. Furthermore,  all stable sets in $\mathcal{S}$ remain stable sets, since none of them contains both $v$ and $\tilde v$, which are adjacent in $G$, and all edges from $\{v,v'\}$ to vertices out of $C$ have been removed.
\end{proof}

Let $G$ be a $(c,s)$-normal graph with a minimal $(c,s)$-normal cover $(\mathcal{C},\mathcal{S})$. We say that vertices \linebreak$v_1,v_2,\ldots, v_{|\mathcal{C}|}\in\vozl{G}$ are \emph{clique generators} for $\mathcal{C}$ if there is a bijective correspondence between vertices $v_i$ and cliques $C_i\in\mathcal{C}$ such that for each $i$, $v_i$ is private to $C_i$. We define \emph{stable set generators} for $\mathcal{S}$ analogously.

In what follows, we prove Theorem~\ref{upperBound} by induction on $c+s$. The strategy is as follows. For a $(c,s)$-normal graph $G$, we first take a minimal $(c,s)$-normal cover $(\mathcal{C},\mathcal{S})$. Then we find a set of clique generators and we run \textsc{C-reduce} for all these generators, \ie, we repeat the reduction $|\mathcal{C}|$ times\footnote{In the formal proof of Theorem~\ref{upperBound} below, we reduce clique generators only if we have at least as many stable sets as cliques. Else, we reduce stable set generators.}. Then we are left with a $(c-1,s)$-normal graph $G'$. If $G'$ has at least half of the original vertices, we can use induction hypothesis to show that $N(c,s)\leq2N(c-1,s)\leq2^{c+s}$.

In order for $G$ not to lose too many vertices during the just described reduction of clique generators, we want to prove that in any minimal normal cover we do not have too many stable sets and cliques. To prove this, we will use the following lemma.

\begin{lemma}
\label{matt}
Let $(\mathcal{C},\mathcal{S})$ be a minimal $(c,s)$-normal cover of a $(c,s)$-normal graph $G$ on $n>1$ vertices. If $U\subseteq\vozl{G}$ is a set of clique generators that is also a set of stable set generators of $(\mathcal{C},\mathcal{S})$, then $n\geq 2|U|$.
\end{lemma}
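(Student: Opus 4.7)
The plan is to use a linear-algebraic rank argument exploiting the fact that each clique in $\mathcal{C}$ meets each stable set in $\mathcal{S}$ in exactly one vertex. Since $U$ is simultaneously a set of clique generators and a set of stable set generators, $|\mathcal{C}|=|\mathcal{S}|=|U|$, and I may index both covers by $U$, writing $C_u$ and $S_u$ for the clique and stable set whose private generator is $u$.

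Next, for each $w\in\vozl{G}$ I would encode its incidences by two indicator vectors $\vec a_w,\vec b_w\in\{0,1\}^{|U|}$ with $(\vec a_w)_u=1$ iff $w\in C_u$ and $(\vec b_w)_u=1$ iff $w\in S_u$. The key identity is then
\[
\sum_{w\in\vozl{G}}\vec a_w\vec b_w^{\top}=J,
\]
where $J$ is the $|U|\times|U|$ all-ones matrix: the $(u,v)$-entry of the left-hand side counts $|C_u\cap S_v|$, which equals $1$ because $C_u$ is a clique and $S_v$ a stable set that intersect by normality.

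Splitting the sum according to whether $w\in U$ or $w\in V':=\vozl{G}\setminus U$, privateness of each $u\in U$ to both $C_u$ and $S_u$ forces $\vec a_u=\vec b_u=e_u$, hence $\sum_{u\in U}\vec a_u\vec b_u^{\top}=I$. Subtracting gives
\[
\sum_{w\in V'}\vec a_w\vec b_w^{\top}=J-I.
\]

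Finally, I would invoke the fact that over $\mathbb R$ the matrix $J-I$ has rank $|U|$ whenever $|U|\ge 2$, since its eigenvalues are $|U|-1$ (once) and $-1$ (with multiplicity $|U|-1$), both nonzero. Each summand on the left is an outer product, hence of rank at most $1$, and rank is subadditive, so the number of summands satisfies $|V'|\ge \mathrm{rank}(J-I)=|U|$, yielding $n=|U|+|V'|\ge 2|U|$. The degenerate cases $|U|\le 1$ are immediate from the hypothesis $n>1$. The main step that needs to be spotted is the matrix identity $\sum_w\vec a_w\vec b_w^{\top}=J$; once it is in hand, the rest follows from elementary linear algebra.
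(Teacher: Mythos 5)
Your proof is correct and follows essentially the same route as the paper: the same indicator vectors, the same identity $\sum_w \vec a_w\vec b_w^{\top}=J$, the splitting into $I$ plus $J-I$ via privateness, and the rank-subadditivity conclusion. Your handling of the degenerate case $|U|\le 1$ (where $n\ge 2|U|$ follows directly from $n>1$) is a clean way to justify the $|U|\ge 2$ hypothesis needed for $\operatorname{rank}(J-I)=|U|$, matching the paper's remark that $n>1$ forces $t>1$.
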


\begin{proof}
From the definition of generators it follows that  $|\mathcal{C}|=|\mathcal{S}|=|U|$. If we denote $t=|U|$, let $U=\{v_1,v_2,\ldots, v_t\}$ and let the remaining vertices of $G$ be $v_{t+1},v_{t+2},\ldots, v_n$. Furthermore, let the (unique) clique and the (unique) stable set from $(\mathcal{C},\mathcal{S})$ that contains $v_i$ be denoted by $C_i$ and $S_i$, respectively.
For $i=1,\ldots, n$, we define column vectors $c_i,s_i \in\{0,1\}^t$ as
$$
c_i(j)=\left\{
	\begin{array}{ll}
		1 & v_i\in C_j\\
		0 & \mbox{otherwise}
	\end{array}
\right. \qquad \mbox{and} \qquad
s_i(j)=\left\{
	\begin{array}{ll}
		1 & v_i\in S_j\\
		0 & \mbox{otherwise.}
	\end{array}
\right.
$$
This gives us, for each $i$,
$$c_i\cdot s^T_i[j,k]=\left\{
	\begin{array}{ll}
		1  & \mbox{if } C_j \cap S_k = \{v_i\}\\
		0 & \mbox{otherwise.}
	\end{array}
\right.$$
Because $(\mathcal{C},\mathcal{S})$ is a normal cover, we have
$$\sum_{i=1}^n c_i\cdot s^T_i=J_t,$$
where $J_t$ denotes the all-ones $t\times t$ matrix. Furthermore,
$$\sum_{i=1}^t c_i\cdot s^T_i=I_t,$$
where $I_t$ denotes the identity matrix. It follows that
$$\sum_{i=t+1}^n c_i\cdot s^T_i=J_t-I_t.$$
On the left we have a sum of $n-t$ matrices of rank 1 and on the right we have a matrix of rank $t$. Here we used the assumption that $n>1$, which implies that $t>1$. It follows that $n-t\geq t$ which proves the lemma.
\end{proof}

Now we can prove that no minimal normal cover contains too many stable sets or cliques.

\begin{lemma}
\label{central}
Let $(\mathcal{C},\mathcal{S})$ be a minimal $(c,s)$-normal cover of a graph $G$ of order $n$.
Then $|\mathcal{C}|+|\mathcal{S}|\leq n+1$.
\end{lemma}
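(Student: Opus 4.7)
The plan is to apply Lemma~\ref{matt} as a black box, combined with a counting argument on generators. Choose clique generators $V_C = \{v_1, \ldots, v_k\}$ and stable-set generators $V_S = \{u_1, \ldots, u_l\}$ for $(\mathcal{C}, \mathcal{S})$; these exist by minimality and have sizes $k$ and $l$ respectively. Set $T = V_C \cap V_S$ and $t = |T|$. Since $V_C \cup V_S \subseteq V(G)$, we immediately have $n \geq |V_C \cup V_S| = k + l - t$, that is, $k + l \leq n + t$; this already proves the lemma when $t \leq 1$, so the entire content lies in the case $t \geq 2$.

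For $t \geq 2$, I will reindex so that $v_i = u_i$ is simultaneously private to $C_i$ and to $S_i$ for every $i = 1, \ldots, t$. Set $V'' = (C_1 \cup \cdots \cup C_t) \cap (S_1 \cup \cdots \cup S_t)$ and form the sub-cover $\mathcal{C}'' = \{C_i \cap V'' : 1 \leq i \leq t\}$, $\mathcal{S}'' = \{S_i \cap V'' : 1 \leq i \leq t\}$ of the induced subgraph $G[V'']$. Each $v_i$ remains private to $C_i \cap V''$ and to $S_i \cap V''$ (its original privacy already forbids membership in any other $C_{i'}$ or $S_{i'}$, and restriction to $V''$ only removes vertices), and the normality condition survives because the unique vertex of $C_i \cap S_j$ already lies in $V''$ by construction. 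Thus $U = \{v_1, \ldots, v_t\}$ is simultaneously a set of clique and stable-set generators of a minimal $(c,s)$-normal cover of $G[V'']$, and Lemma~\ref{matt} (applicable since $|V''| \geq t \geq 2 > 1$) yields $|V''| \geq 2t$.

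The step I expect to be the main obstacle is showing that the at least $t$ vertices of $V'' \setminus U$ produced by Lemma~\ref{matt} avoid $V_C \cup V_S$, so that they genuinely add to the count. Each such $w$ lies in $C_i \cap S_j$ for some distinct $i, j \leq t$ (if $i = j$, then $w$ would be the unique vertex of $C_i \cap S_i$, namely $v_i$). If $w = v_a$ for some $a$, then $v_a \in C_i$ combined with the privacy of $v_a$ to $C_a$ forces $a = i$, but then $v_i \in S_j$ with $j \neq i$ contradicts the privacy of $v_i = u_i$ to $S_i$. Symmetrically, $w = u_b$ together with $w \in S_j$ forces $b = j$, after which $u_j = v_j \in C_i$ yields the same contradiction. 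Hence $V(G) \setminus (V_C \cup V_S)$ contains at least $t$ vertices, so $n \geq (k + l - t) + t = k + l$, which is strictly stronger than the required bound $k + l \leq n + 1$.
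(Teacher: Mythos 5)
Your proof is correct, but it takes a genuinely different route from the paper's. The paper argues by induction on $n$ with a case split: either every clique has a private vertex that is the unique private vertex of some stable set, in which case Lemma~\ref{matt} applies directly to the whole graph and gives $|\mathcal{C}|+|\mathcal{S}|\le n$, or else some clique can be fed to the reduction algorithm \textsc{C-reduce}, which deletes one vertex while decreasing $|\mathcal{C}|+|\mathcal{S}|$ by at most one, and the induction hypothesis finishes the job. You avoid both the induction and the algorithm entirely: you localize to the set $T$ of vertices that are simultaneously clique and stable-set generators, check that restricting the corresponding $t=|T|$ cliques and stable sets to $V''$ yields a minimal normal cover of $G[V'']$ with $T$ as a common generator set (the restricted cliques are pairwise distinct because each retains its private vertex, and each $C_i\cap S_j$ is a single vertex that lies in $V''$ by construction), and then apply Lemma~\ref{matt} to this induced subgraph. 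The genuinely new ingredient you need --- and supply correctly --- is the disjointness argument that the $\ge t$ vertices of $V''\setminus U$ avoid $V_C\cup V_S$: since a clique and a stable set meet in at most one vertex, $C_i\cap S_i=\{v_i\}$ forces $i\ne j$ for such a $w\in C_i\cap S_j$, and privacy then rules out $w$ being any generator. Combined with $|V_C\cup V_S|=k+l-t$ this gives $k+l\le n$ for $t\ge2$ and $k+l\le n+1$ for $t\le1$. What your approach buys is a self-contained, non-inductive proof that does not rely on \textsc{C-reduce} or Claim~\ref{algoritem}, and it even yields the slightly sharper bound $|\mathcal{C}|+|\mathcal{S}|\le n$ whenever at least two vertices generate both a clique and a stable set; the paper's version is shorter on the page mainly because the reduction machinery is already in place for the proof of Theorem~\ref{upperBound}.
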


\begin{proof}
We may assume without loss of generality that $|\mathcal{C}|\geq|\mathcal{S}|$.
We will prove the lemma  by induction on $n$, starting with $n=1$ where the lemma clearly holds. For $n>1$, exactly one of the following two options has to hold.

Suppose first that each clique $C_i\in\mathcal{C}$ contains a vertex $v_i$ which is private to $C_i$ and is also private to a stable set, which we denote by $S_i\in\mathcal{S}$, such that $v_i$ is the unique private vertex of $S_i$.  If we denote by $U$ the set of vertices $v_i$, then $U$ is a set of clique generators for $(\mathcal{C},\mathcal{S})$. Because $|\mathcal{C}|\geq|\mathcal{S}|$, the set $U$ is also a set of stable set generators for $(\mathcal{C},\mathcal{S})$ and, in particular, $|\mathcal{S}| = |U|$.
This implies that $2|U|\leq n$ by Lemma~\ref{matt} and hence $|\mathcal{C}|+|\mathcal{S}|\leq n$.

Suppose now that there exists a clique $C\in\mathcal{C}$ such that all of its private vertices are covered with stable sets, each of which has some private vertex outside $C$.
If we run \textsc{C-reduce} on the clique $C$ and one of its private vertices, we get a normal graph $G$ with one vertex less. Because the stable sets that contain private vertices of $C$ had at least one private vertex outside $C$, the number of stable sets in the new minimal normal cover is not reduced while the number of cliques is reduced by at most one. The result follows by induction.
\end{proof}

We are ready to prove Theorem~\ref{upperBound}. Actually, we will prove a slightly stronger version in order to use induction.

\begin{lemma}
$N(c,s) \le 2^{c+s}-1$.
\end{lemma}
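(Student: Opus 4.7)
The plan is to prove $N(c,s)\le 2^{c+s}-1$ by induction on $c+s$. The base cases $c=0$ or $s=0$ are vacuous, since an empty family of cliques (resp.\ stable sets) cannot cover any nonempty graph. For the inductive step, assume $c,s \ge 1$, let $G$ be a $(c,s)$-normal graph on $n$ vertices, and fix a minimal $(c,s)$-normal cover $(\mathcal{C},\mathcal{S})$.

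I would first choose, for each clique $C_i \in \mathcal{C}$, a private vertex $v_i$ (which exists by minimality) and apply \textsc{C-reduce} to $(C_i,v_i)$ in turn. Since any vertex private to $C_i$ lies in no other clique of $\mathcal{C}$, the private vertices of distinct cliques are never affected by reductions performed on one another, and so the $|\mathcal{C}|$ invocations can be carried out sequentially without interference. Each call removes exactly one vertex from $G$ (by deletion when $|P|=1$ or by contracting two private vertices when $|P|>1$) and either strictly decreases the size of $C_i$ in the cover or removes $C_i$ from $\mathcal{C}$ altogether; crucially, it does not enlarge any stable set in $\mathcal{S}$. By Claim~\ref{algoritem} applied after each call, $(\mathcal{C},\mathcal{S})$ remains a normal cover. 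Once all $|\mathcal{C}|$ reductions are done, every clique in the resulting cover has size at most $c-1$ and every stable set still has size at most $s$, so the resulting graph is $(c-1,s)$-normal with $n-|\mathcal{C}|$ vertices. The inductive hypothesis therefore gives
\[ n - |\mathcal{C}| \le N(c-1,s) \le 2^{c+s-1}-1. \]

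Running the mirror procedure on stable set generators (equivalently, applying \textsc{C-reduce} in the complement $\overline{G}$, which is also $(s,c)$-normal) yields in the same way
\[ n - |\mathcal{S}| \le N(c,s-1) \le 2^{c+s-1}-1. \]
Adding the two bounds gives $2n - (|\mathcal{C}|+|\mathcal{S}|) \le 2^{c+s}-2$, while Lemma~\ref{central} supplies $|\mathcal{C}|+|\mathcal{S}| \le n+1$. Combining these yields $2n-(n+1)\le 2^{c+s}-2$, hence $n \le 2^{c+s}-1$, which completes the induction. The main subtlety is that using the inductive hypothesis in only one direction gives merely $n \le 2^{c+s-1}-1+|\mathcal{C}|$, which by itself is too weak because $|\mathcal{C}|$ can be as large as $n$; the extra ``$-1$'' strengthening in the claim (which is why the lemma is stated with $2^{c+s}-1$ rather than $2^{c+s}$) is exactly what pairs with Lemma~\ref{central} to absorb the loss from both directions and close the induction.
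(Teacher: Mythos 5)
Your proof is correct and follows essentially the same route as the paper: a minimal cover, one pass of \textsc{C-reduce} per clique to drop to a $(c-1,s)$-normal graph, Lemma~\ref{central} to control $|\mathcal{C}|+|\mathcal{S}|$, and induction on $c+s$. The only (cosmetic) difference is that you symmetrize by adding the clique-side and stable-set-side inequalities, whereas the paper assumes $|\mathcal{C}|\le|\mathcal{S}|$ without loss of generality and uses $|\mathcal{C}|\le\frac{n+1}{2}$ directly; the arithmetic is equivalent.
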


\begin{proof}
The proof is by induction on $c+s$.
Let $G$ be a $(c,s)$-normal graph of order $n$.
If $c=1$ or $s=1$, then $G$ is an empty graph or a complete graph, hence $n=c+s-1\leq 2^{c+s}-1$. So suppose $c,s\geq 2$ and let $(\mathcal{C},\mathcal{S})$ be a minimal $(c,s)$-normal cover of $G$. By Lemma~\ref{central} we have $|\mathcal{C}|+|\mathcal{S}|\leq n+1$ and hence $|\mathcal{C}|\leq\frac{n+1}{2}$ or $|\mathcal{S}|\leq\frac{n+1}{2}$. We may assume that $|\mathcal{C}|\leq|\mathcal{S}|$ and therefore $|\mathcal{C}|\leq\frac{n+1}{2}$ because if this is not the case, then we can replace $G$ by the complement of $G$. Next, we run \textsc{C-reduce} $|\mathcal{C}|$-times, once for each clique in $\mathcal{C}$ and for some private vertex of this clique. By Claim~\ref{algoritem} we are left with a $(c-1,s)$-normal graph on at least $\frac{n-1}{2}$ vertices which implies that $\frac{n-1}{2}\leq N(c-1,s)$. By the induction hypothesis, we have $N(c-1,s)\leq 2^{c+s-1}-1$ which implies $n\leq 2^{c+s}-1$.
\end{proof}

\section{Some exact values of $N(c,s)$}
	\label{exact}

We start by a simple observation.
\begin{claim}
 \label{maksim}
If $(\mathcal{C},\mathcal{S})$ is a minimal $(c,s)$-normal cover of a graph $G$ with $N(c,s)$ vertices, then each clique in $\mathcal{C}$ has $c$ vertices and each stable set in $\mathcal{S}$ has $s$ vertices.
\end{claim}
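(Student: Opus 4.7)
The plan is to prove this by contradiction: if some $C \in \mathcal{C}$ has $|C| < c$, I will construct a $(c,s)$-normal graph on $N(c,s)+1$ vertices, contradicting the definition of $N(c,s)$. The statement about stable sets then follows by the symmetry $N(c,s) = N(s,c)$: the complement $\bar G$ is $(s,c)$-normal on the same number of vertices with $(\mathcal{S},\mathcal{C})$ as a minimal $(s,c)$-normal cover, so the clique statement applied to $\bar G$ yields that every $S \in \mathcal{S}$ has exactly $s$ vertices.

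For the main construction, suppose $C \in \mathcal{C}$ satisfies $|C| < c$. Minimality supplies a private vertex $v$ of $C$, and the covering property of $\mathcal{S}$ supplies some $S_v \in \mathcal{S}$ containing $v$. I would extend $G$ to a graph $G^+$ by adding a new vertex $v'$ as a \emph{true twin} of $v$, i.e., $N_{G^+}(v') := N_G(v) \cup \{v\}$. Define
\[
C^+ := C \cup \{v'\}, \qquad S' := (S_v \setminus \{v\}) \cup \{v'\},
\]
and set $\mathcal{C}^+ := (\mathcal{C} \setminus \{C\}) \cup \{C^+\}$ and $\mathcal{S}^+ := \mathcal{S} \cup \{S'\}$. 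Then $C^+$ is a clique of size $|C|+1 \le c$ because $v'$ is adjacent to $v$ and to every vertex of $C \setminus \{v\} \subseteq N_G(v)$; and $S'$ is a stable set of size $|S_v| \le s$ because every vertex of $S_v \setminus \{v\}$ is a non-neighbor of $v$ and hence of its twin $v'$.

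It remains to verify that $(\mathcal{C}^+, \mathcal{S}^+)$ is indeed a normal cover of $G^+$. Every old vertex is covered exactly as before, and $v'$ lies in both $C^+$ and $S'$. For the crossing condition, pick any old clique $C^* \in \mathcal{C}\setminus\{C\}$; since $v$ is private to $C$ we have $v \notin C^*$, so any vertex of $S_v \cap C^*$ lies in $(S_v\setminus\{v\}) \cap C^* \subseteq S' \cap C^*$, giving $S' \cap C^* \ne \emptyset$. The pair $(C^+, S^*)$ for any old $S^* \in \mathcal{S}$ intersects because $S^* \cap C \ne \emptyset$ and $C \subseteq C^+$; and $v' \in C^+ \cap S'$ handles the last pair. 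I do not expect any real obstacle here: the entire argument pivots on the privacy of $v$, which is precisely what allows $v$ to be safely removed from $S_v$ while preserving intersections with every clique other than $C$ itself.
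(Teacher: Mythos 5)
Your proposal is correct and follows essentially the same route as the paper: exploit the private vertex $v$ of an undersized clique, add one new vertex to enlarge that clique to size $|C|+1\le c$, and obtain a new stable set by swapping $v$ for the new vertex in a stable set containing $v$, with privacy of $v$ guaranteeing the crossing condition. The only cosmetic differences are that the paper joins the new vertex only to $C$ (rather than making it a true twin of $v$) and clones every stable set containing $v$ (rather than just one), neither of which changes the substance of the argument.
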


\begin{proof}
Because of the symmetry between cliques and stable sets, it suffices to prove the claim for cliques. Suppose that there is a clique $C\in \mathcal{C}$ of size strictly less than $c$ with a private vertex $v$. If we add a new vertex $u$ to $G$ and connect it to all vertices of $C$, we can increase the clique $C$ by adding the vertex $u$. Furthermore, if we clone each stable set from $\mathcal{S}$ that contains $v$ and replace $v$ with $u$ in one of the clones, the resulting cover is clearly a $(c,s)$-normal cover of the (new) graph $G$. Hence, we have a $(c,s)$-normal graph with more than $N(c,s)$ vertices, a contradiction.
\end{proof}

For integers $r\ge2$ and $k\ge1$, let a red-blue graph $G_{r,k}$ be the following graph on $n=r(k+1)$ vertices. The blue subgraph of $G_{r,k}$ is the disjoint union of $r$ stars, each isomorphic to $K_{1,k}$. The vertices of degree $k$ in these stars are called the \emph{roots}. For each star, add red edges to form a red clique on the non-root vertices of the star, and for each root vertex $u$, add red edges to every non-root vertex in all other stars. If $r\ge3$, we also add red edges between each pair of roots.

It is easy to see that the graph $G_{r,k}$ is $(2,k+r-1)$-normal: take each blue edge for a clique of the cover and for each root, take all of its blue neighbors together with all of the remaining roots for a stable set. If we denote by $m=k+r-1$ the size of stable sets in our cover, we have $n=(k+1)(m-k+1) = \left(\frac{m}{2}+1\right)^2 - (k-\frac{m}{2})^2$. Hence, for $k= \left\lfloor\frac{m}{2}\right\rfloor$ and $k= \left\lceil\frac{m}{2}\right\rceil$, we have $n=\bigl\lfloor\left(\frac{m}{2}+1\right)^2\bigr\rfloor$.

Next, we will compute $N(c,s)$ and we will describe $(c,s)$-normal graphs with $N(c,s)$ vertices for small values of $c$ and $s$. Note that $N(c,s)=N(s,c)$ and for every $(c,s)$-normal graph with $N(c,s)$ vertices, its complement is an $(s,c)$-normal graph with $N(s,c)$ vertices. 

Let $G$ be a $(c,s)$-normal graph with $N(c,s)$ vertices and minimum number of edges. Let $(\mathcal{C,S})$ be a minimal $(c,s)$-normal cover of $G$. Together with
Claim~\ref{maksim} we therefore have the following properties of $G$:

\begin{enumerate}
  \item Every clique in $\mathcal{C}$ has $c$ vertices.
  \item Every stable set in $\mathcal S$ has $s$ vertices.
  \item Every edge of $G$ is in one of the cliques in $\mathcal C$.
  \item Every clique in $\mathcal C$ has a private vertex. The degree of every vertex that is private to a clique in $\mathcal C$ is $c-1$.
  \item Every stable set in $\mathcal S$ has a private vertex.
\end{enumerate}

\begin{proposition}
\label{prop:small values}
Let $m\ge1$ be an integer.
\begin{enumerate}[a)]
\item $N(1,m)=N(m,1)=m$ and the only $(m,1)$-normal graph with $N(m,1)$ vertices is the complete graph $K_m$.
\item\label{bbb} $N(2,m)=N(m,2)=\bigl\lfloor\left(\frac{m}{2}+1\right)^2\bigr\rfloor$ and $(2,m)$-normal graphs with $N(2,m)$ vertices are precisely graphs of type $G_{r,k}$\/, where $r=m+1-k$ and $k = \left\lfloor\frac{m}{2}\right\rfloor$ or $k = \left\lceil\frac{m}{2}\right\rceil$.
\item \label{tockaC} $N(3,3)=10$ and every $(3,3)$-normal graph with $N(3,3)$ vertices is of type $G_3$ or $F_3$.
\item \label{tockaD} For every $s\ge4$ we have
$$
  \tfrac{1}{27}s^3 (1+\oo(1)) \le N(3,s) \le \tfrac{4}{27}s^3 (1+\oo(1)).
$$
\end{enumerate}
\end{proposition}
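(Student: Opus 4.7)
For (a), a $(1,m)$-normal graph has each vertex as a one-element clique, so the normality condition forces every stable set to contain every vertex; hence $\mathcal{S}$ has a unique element equal to $V(G)$, so $G$ is edgeless and $|V(G)|\le m$. Taking complements yields the dual statement with $K_m$ as unique extremal graph.

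For (b), I would work with a minimum-edge $(2,m)$-normal graph $G$ on $N(2,m)$ vertices and a minimal $(2,m)$-normal cover $(\mathcal{C},\mathcal{S})$. By Claim~\ref{maksim}, every clique in $\mathcal{C}$ is an edge of $G$, and by minimality every such edge has a private endpoint which, having degree $c-1=1$, is a leaf. Hence $G$ is a disjoint union of $r$ non-trivial stars; denote by $k_i$ the root degree of the $i$-th star. For a leaf $v$ of the $i$-th star, a stable set containing $v$ must cover every edge of $G$; since the root of star $i$ is adjacent to $v$, we must include all $k_i-1$ other leaves of that star, and for every other star it is cheapest to include the root. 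This forces $k_i+r-1\le m$, whence
\begin{equation*}
  n=\sum_{i=1}^r(k_i+1)\le r(m-r+2)\le \bigl\lfloor (m/2+1)^2\bigr\rfloor,
\end{equation*}
with equality forcing $r\in\{\lfloor(m+2)/2\rfloor,\lceil(m+2)/2\rceil\}$ and $k_i=m-r+1$ for every $i$, which identifies $G$ as a graph of type $G_{r,k}$ with $k\in\{\lfloor m/2\rfloor,\lceil m/2\rceil\}$.

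For (c), the lower bound $N(3,3)\ge 10$ comes from Proposition~\ref{normalno} applied to $G_3$ and from the recursion $f_c=5f_{c-2}+5$ giving $f_3=10$ for $F_3$. For the matching upper bound and the classification, I plan to take a minimum-edge extremal graph $G$ with a minimal cover $(\mathcal{C},\mathcal{S})$; by Claim~\ref{maksim} all elements of $\mathcal{C}$ are triangles and all elements of $\mathcal{S}$ have three vertices, and Lemma~\ref{central} gives $|\mathcal{C}|+|\mathcal{S}|\le n+1$. Each triangle contains a degree-$2$ private vertex $v_T$, and the three-vertex stable set meeting $v_T$ must still meet every other triangle while avoiding the two neighbours of $v_T$. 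A careful case analysis on how triangles share edges or vertices with the members of $\mathcal{S}$ then forces the red-blue skeleton of $G$ to coincide (up to isomorphism) with one of the types $G_3$ or $F_3$; this classification is the most delicate part of the proof.

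For (d), I would obtain the lower bound via a three-level rooted-tree construction: take $r$ roots, attach $m$ middle children to each root, and attach $\ell$ leaf children to each middle; declare every root--middle--leaf triple a triangle of $\mathcal{C}$. Each leaf is a private degree-$2$ vertex, and a minimum stable set containing a leaf $v$ must include $v$'s $\ell-1$ sibling leaves (its middle and root being adjacent to $v$), one middle from each of the other $m-1$ branches of its root, and one root from every other root-subtree, totalling $\ell+m+r-2\le s$. Then $n=r+rm+rm\ell\ge rm\ell$, and AM--GM applied to $\ell+m+r\le s+2$ at $r=m=\ell\approx(s+2)/3$ yields $n\ge s^3/27\,(1+\oo(1))$. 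For the upper bound I expect to parallel (b) one dimension higher: each triangle has a private degree-$2$ vertex, and a double count of pairs $(T,S)\in\mathcal{C}\times\mathcal{S}$ sharing a vertex, combined with $|S|\le s$ and $|\mathcal{C}|+|\mathcal{S}|\le n+1$, produces a cubic bound whose optimal constant $4/27$ emerges by maximising over the three parameters of the extremal configuration; pinning down this constant is the other main obstacle.
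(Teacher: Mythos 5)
Your parts a) and b) are correct, and your lower-bound construction in d) is in substance identical to the paper's: the paper takes $s-2d+2$ stars $K_{1,d}$ with every edge of multiplicity $d$, where $d=\lceil s/3\rceil$, which is exactly your three-level tree with $r=s-2d+2$ roots, $m=d$ middles per root and $\ell=d$ private leaves per middle; your constraint $\ell+m+r-2\le s$ and the count $n\approx rm\ell$ give the same $\tfrac{1}{27}s^3$ bound. In b) you argue via the star decomposition of a minimum-edge extremal graph, while the paper counts directly with clique generators (letting $k$ be the largest number of cliques through a non-generator and deriving $m\ge k+|R|-1$ and $n\le(k+1)|R|$); the two routes are equivalent and lead to the same optimization of $(k+1)(m+1-k)$.

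The genuine gap is that the two hardest claims are only announced, not proved. For c), the sentence ``a careful case analysis \dots forces the red-blue skeleton of $G$ to coincide with $G_3$ or $F_3$'' is precisely the content that must be supplied; without it you have not even established $N(3,3)\le 10$. The paper does this by passing to the auxiliary graph $H=G-P$ obtained by deleting one private vertex per triangle, equipping each edge of $H$ with the multiplicity $m(e)$ (the number of triangles through it), so that $|G|=|H|+\sum_{e}m(e)$ and every vertex satisfies $\deg_H(v)+\lambda(v)-1\le s$, where $\lambda(v)$ is the largest incident multiplicity; a sequence of eliminations (no multiplicity $3$, then no multiplicity $2$, then no vertex of degree $3$, no $K_2$-component, at most two components) reduces $H$ to two disjoint paths of length two (type $G_3$) or a $5$-cycle (type $F_3$). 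For the upper bound in d), your proposed double count of incident pairs $(T,S)$ combined with $|\mathcal C|+|\mathcal S|\le n+1$ is not the paper's argument, and it is unclear that it yields a cubic bound at all, let alone the constant $\tfrac{4}{27}$. The paper instead fixes one stable set through the private vertices of an edge of maximum multiplicity $\lambda$, observes that its $s-\lambda$ remaining vertices together with their neighbourhoods exhaust $H$ (so $|H|=\OO(s^2)$), and then bounds the total multiplicity by $\sum_i\lambda(u_i)(s-\lambda(u_i))\le\max\bigl\{\tfrac{s^3}{8},\,\lambda(s-\lambda)^2\bigr\}\le\tfrac{4}{27}s^3$. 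You would need to supply an argument of this kind (or make your double count actually work) before c) and d) can be considered proved.
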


\begin{proof}
\begin{enumerate}[a)]
\item Clearly, in a graph that has a normal cover with stable sets of size 1, every clique must contain all vertices, so the graph is complete. Thus, $N(1,m)=N(m,1)=m$.

\item Let $(\mathcal{C},\mathcal{S})$ be a minimal $(2,m)$-normal cover of a graph $G$ with $n=N(2,m)$ vertices, let $Q$ be a set of clique generators and let $R=\vozl{G}\backslash Q$ be the set of the remaining vertices.
Let $v$ be a vertex in $R$ that is covered by the largest number of cliques from $\mathcal{C}$, say by $k$ cliques. Since each of these $k$ cliques is of size 2, any stable set in the cover that does not contain $v$ must cover $k$ private vertices of cliques that cover $v$ together with at least $|R|-1$ additional vertices to intersect each clique from $\mathcal{C}$, hence $m\geq k+|R|-1$. Because no vertex is covered with more than $k$ cliques, we have $n\leq|R|+k|R|$ which gives
$$
   n\leq(k+1)(m+1-k) = \left(\frac{m}{2}+1\right)^2 - \left(k-\frac{m}{2}\right)^2.
$$
The right-hand side is maximal only for $k = \left\lfloor\frac{m}{2}\right\rfloor$ or $k = \left\lceil\frac{m}{2}\right\rceil$ in which case we get
$n\leq\bigl\lfloor\left(\frac{m}{2}+1\right)^2\bigr\rfloor$.
The graph $G_{r,k}$ with parameters as in the statement of the proposition shows that $N(2,m) \geq \allowbreak \bigl\lfloor\left(\frac{m}{2}+1\right)^2\bigr\rfloor$ which proves $N(2,m)=\bigl\lfloor\left(\frac{m}{2}+1\right)^2\bigr\rfloor$. It is now clear that every $(2,m)$-normal graph with $N(2,m)$ vertices is of type $G_{r,k}$ (and vice versa).

\item The graphs of type $G_3$ (see Figure~\ref{G3}) and the graphs of type $F_3$ (see Figure~\ref{Fa}) are examples of $(3,3)$-normal graphs with 10 vertices. In what follows, we will prove that that a $(3,3)$-normal graph cannot have more than 10 vertices and if it has 10 vertices, then it is of type $G_3$ or $F_3$. Let us first introduce an auxiliary graph $H$ and some notation that will also be used in the proof of~\ref{tockaD}).

For $s\ge3$, let $G$ be a $(3,s)$-normal graph with $N(3,s)$ vertices, and let $(\mathcal{C},\mathcal{S})$ be a minimal $(3,s)$-normal cover of $G$. We remove all edges from $G$ except those induced by cliques from $\mathcal{C}$. Note that doing so we did not lose the information about the ``normal structure of $G$'', which is hidden in the normal cover $(\mathcal{C},\mathcal{S})$.
For each $C\in\mathcal C$ we select one vertex that is private to $C$ and denote it by $p_C$. Let $P=\{p_C\mid c\in\mathcal C \}$. By the assumption outlined above, vertices in $P$ have degree 2 in $G$ and it is easy to see that they form an independent set in $G$. Let $H=G-P$. Note that $H$ has no isolated vertices. Each edge $e\in E(H)$ corresponds to at least one clique in $\mathcal C$. We denote by $m(e)\ge1$ the number of cliques in $\mathcal C$ containing $e$, and call it the \emph{multiplicity} of $e$ in $H$. Clearly, $H$ and its multiplicity function determine $G$ up to an isomorphism. Moreover,
\begin{equation}
   |G| = N(3,s) = |H| + \sum_{e\in E(H)} m(e).
   \label{eq:number vertices in G from H}
\end{equation}

For each vertex $v$ of $H$ define $\lambda(v)=\max \{m(e)\mid e\in E(H) \text{ is incident with } v\}$. For a vertex $v\in V(G)$, let $u\in V(G)$ be such that $\lambda(v)=m(vu)$. Consider a stable set $S\in\mathcal S$ containing the private vertex $w$ in a clique $\{u,v,w\}$. Since $S$ intersects all cliques in $\mathcal C$ and is independent, it must contain $m(uv)$ private vertices of the cliques containing $u$ and $v$, and must contain a different vertex for each neighbor $u'\ne u$ of $v$ in $H$ (either contains $u'$ or the corresponding private vertices). This shows that
\begin{equation}
   \deg_H(v) + \lambda(v) -1 \le s.
   \label{eq:degree in H is bounded}
\end{equation}

Let us now consider the case $s=3$.  Note that $n=|G|\geq 10$ and all cliques in $\mathcal{C}$ and all stable sets in $\mathcal{S}$ are of size 3. 
By~\eqref{eq:degree in H is bounded}, the multiplicities in $H$ are at most 3. If an edge $uv$ has multiplicity 3, then considering a stable set that covers the private vertex of a clique from $\mathcal{C}$ that contains $u$ and $v$, it follows that $G$ has just 5 vertices. So, this cannot happen.

Suppose now that $m(uv)=2$. Then there is a vertex $z$ of $H$ that is incident with all edges of $H-uv$. If $H$ had just 3 vertices, \eqref{eq:number vertices in G from H} would imply that $G$ has at most 9 vertices. Thus, there is a vertex $z'\notin \{u,v,z\}$. Since $H$ has no isolated vertices and $z$ covers all edges in $H-uv$, the vertex $z'$ is adjacent to $z$ and has degree 1 in $H$. The edges $vz$ and $uz$ (if they are present in $H$) have multiplicity 1 since no vertex covers all edges of $H-uz$ or $H-vz$. Using~\eqref{eq:number vertices in G from H} and $|G|\geq 10$, we see that $H$ must have another vertex or both edges $uz$ and $vz$. Hence, the edge $zz'$ must also have multiplicity 1, because neither $u$ nor $v$ (nor any other vertex) covers all edges of $H-zz'$. This implies that $z$ has another neighbor $z''$, which is also of degree 1 in $H$. Hence, $z$ is not adjacent to $u,v$ because there cannot exist a stable set of size 3 that would cover the private vertex of the edge $uz$ or $vz$ and would intersect all cliques from $\mathcal{C}$. Considering~\eqref{eq:number vertices in G from H} and $|G|\geq 10$, we see that $z$ has yet another neighbor $z'''$. However, this is impossible since any stable set that would cover the private vertex corresponding to the clique containing $zz'$ and would intersect all cliques from $\mathcal{C}$, would be of size at least 4. The conclusion is that all edges in $H$ have multiplicity 1.

By~\eqref{eq:degree in H is bounded}, the degree of every vertex in $H$ is at most 3. If a vertex $v$ has degree 3, then $H$ has only one connected component and no other vertex than $v$ and its neighbors. If there was another vertex in $H$ adjacent to a neighbor $u$ of $v$, then a stable set that would cover the private vertex of the clique corresponding to $uv$ could not cover all cliques from $\mathcal{C}$. Thus by~\eqref{eq:number vertices in G from H}, $H$ needs to have at least 6 edges in order for $G$ to have at least 10 vertices. This implies that $H=K_4$ which is clearly impossible. Hence, all vertices of $H$ have degree 2 or 1. It follows that $H$ is a disjoint union of cycles and paths.

If $H$ has a connected component $K_2$, then $G$ has at most $3+N(3,2)=9$ vertices, which is not the case. It is clear that $H$ has at most 3 connected components and because 3 connected components imply that $G$ is a disjoint union of three triangles and has at most 9 vertices, $H$ has at most 2 connected components. If $H$ has two connected components, we know that none is $K_2$. Hence, a stable set from $\mathcal{S}$ needs to cover all edges of one component with just one vertex. This implies that both components are paths of length 2, which gives that $G$ is of type $G_3$.

If $H$ has only one component, we know that it is a path or a cycle. If $H$ is a path of length at most 4, than $G$ has at most 9 vertices, which is false. If $H$ is a path of length at least 5, then the stable set that covers the private vertex corresponding to the second edge in the path cannot cover all cliques from $\mathcal{C}$. Hence, $H$ is a cycle. It is easy to see that it has to be of length 5, which gives that $G$ is of type $F_3$.

\item To prove the lower bound, let $d=\left\lceil \frac{s}{3} \right\rceil$. Consider the graph $H$ consisting of $s-2d+2$ copies of the star $K_{1,d}$, each edge of which has multiplicity $d$. The corresponding graph $G$ -- see the proof in~\ref{tockaC}) -- is $(3,s)$-normal. By \eqref{eq:number vertices in G from H}, it has $(s-2d+2)(d^2+d+1)$ vertices, which gives the desired lower bound.

As for the upper bound, define $H$ and its multiplicity function $m$ as described above. Let $\lambda$ be the largest multiplicity over all edges of $H$; suppose it is attained for the edge $uv$. Consider a stable set containing private vertices of the cliques corresponding to $uv$. As discussed for the bound~\eqref{eq:degree in H is bounded}, it contains $s-\lambda$ other vertices $u_1,\dots,u_{s-\lambda}$. Suppose that exactly first $k$ of them are private vertices of cliques in $\mathcal{C}$ that have been removed when defining $H$. The vertices $u_{k+1},\dots,u_{s-\lambda}$ cover all edges of $H$ except $uv$ and possibly the edges that correspond to private vertices  $u_1,\dots,u_{k}$. Since $H$ has no isolated vertices we conclude that all vertices of $H$ except possibly $u$ and $v$ are one of $u_i$ or their neighbors in $G$. By using this fact and \eqref{eq:degree in H is bounded}, we obtain the following bound:
\begin{align*}
    |H|
&\le 2(k+1) + \sum_{i=k+1}^{s-\lambda}(\deg_H(u_i)+1)  \\
& \le 2(k+1) + \sum_{i=k}^{s-\lambda} (s-\lambda(u_i)+2)\\
&=\OO(s^2).
\end{align*}
A similar count holds for the sum of edge-multiplicities in $H$ as shown next. By~\eqref{eq:number vertices in G from H} and \eqref{eq:degree in H is bounded}, it follows that:
\begin{align*}
   |G| &\le |H| + \lambda + k + \sum_{i=k+1}^{s-\lambda} \lambda(u_i)\deg_H(u_i) \\
&\le |H| + 2s + \sum_{i=k+1}^{s-\lambda} \lambda(u_i)(s-\lambda(u_i)+1) \\
 &\le \OO(s^2) + \sum_{i=k+1}^{s-\lambda} \lambda(u_i)(s-\lambda(u_i)).
\end{align*}
If $\lambda\geq\frac{s}{2}$, then
$$
\sum_{i=k+1}^{s-\lambda} \lambda(u_i)(s-\lambda(u_i))
\le \sum_{i=1}^{s-\lambda}\frac{s^2}{4}
\le \frac{s^3}{8}.
$$
Otherwise, for each $i=k+1,\dots, s-\lambda$, we have
$\lambda(u_i)(s-\lambda(u_i))\le \lambda(s-\lambda)$.
Consequently,
$$
\sum_{i=k+1}^{s-\lambda} \lambda(u_i)(s-\lambda(u_i)) \le  \lambda(s-\lambda)^2 \le \frac{4}{27}s^3,
$$
where the last inequality is attained for $\lambda=\frac{s}{3}$ which maximizes the expression $\lambda(s-\lambda)^2$ for $0\leq\lambda\leq s$. Because $\frac{1}{8}<\frac{4}{27}$, we have
$$
  |G| \le \frac{4}{27}s^3+\OO(s^2),
$$
which establishes the upper bound.
\end{enumerate}
\end{proof}

With a more delicate analysis, we can slightly improve the upper bound in Proposition~\ref{prop:small values}~\ref{tockaD}) and replace the leading constant $\tfrac{4}{27}$ with a smaller constant. As the truth seems to be closer to the lower bound, this does not seem to be worth the effort of making the proof longer.

It would be interesting to prove that, for every fixed $c$, $N(c,s)$ is bounded above by a polynomial in $s$.

\section{Acknowledgements}

We would like to thank Matt DeVos for simplifying our proof of Lemma~\ref{matt} and Fiachra Knox for some valuable discussions.

\bibliographystyle{abbrv}
\bibliography{literature}

\end{document}